\documentclass[final,3p,times]{elsarticle}
\usepackage{amssymb}
\usepackage{amsthm}
\usepackage{latexsym}
\usepackage{amsmath}
\usepackage{amsmath}
\usepackage{color}
\usepackage{graphicx}
\usepackage{indentfirst}
\usepackage{mathrsfs}
\usepackage{pdfsync}
\usepackage{bm}
\usepackage{hyperref}
\hypersetup{hypertex=true,
	colorlinks=true,
	linkcolor=blue,
	anchorcolor=blue,
	citecolor=blue}
\numberwithin{equation}{section}
\allowdisplaybreaks
\usepackage[utf8]{inputenc}

\newtheorem{theorem}{\color{black}\indent Theorem}[section]

\newtheorem{lemma}{\color{black}\indent Lemma}[section]

\newtheorem{definition}{\color{black}\indent Definition}[section]
\newtheorem{remark}{\color{black}\indent Remark}[section]

\journal{ }

\begin{document}

\begin{frontmatter}

    \title{Statistical ensembles in integrable Hamiltonian systems with almost periodic transitions}

    \author[author1]{Xinyu Liu \footnote{ E-mail address : liuxy595@nenu.edu.cn}}
    \author[author2,author3]{Yong Li \footnote{ E-mail address : liyong@jlu.edu.cn} }
    \address[author1]{School of Mathematics and Statistics, Northeast Nolmal University, Changchun 130024, P. R. China.}
    \address[author2]{College of Mathematics, Jilin University, Changchun 130012, P. R. China.}
    \address[author3]{School of Mathematics and Statistics, Center for Mathematics and Interdisciplinary Sciences, Northeast Normal University, Changchun 130024, P. R. China.}

    \cortext[cor1]{Corresponding author at : College of Mathematics, Jilin University,Changchun 130012, P. R. China.} 

    \begin{abstract}
     We study the long-term average evolution of the random ensemble along integrable Hamiltonian systems with time $T$-periodic transitions. More precisely, for any observable $G$, it is demonstrated that the ensemble under $G$ in long time average converges to that over one time period $T$, and that the probability measure induced by the probability density function describing the ensemble at time $t$ weakly converges to the average of the probability measures over time $T$. And we extend the result to almost periodic cases. The key to the proof is based on the {\it {Riemann-Lebesgue lemma in time-average form}} generalized in the paper. 

    \end{abstract}

    \begin{keyword}
    statistical ensembles, integrable Hamiltonian systems, periodic transitions, almost periodic transitions, Fourier analysis.
    \end{keyword}

    \end{frontmatter}

    \section{Introduction}\label{sec:1}
    \setcounter{equation}{0}
    \setcounter{definition}{0}
    \setcounter{proposition}{0}
    \setcounter{lemma}{0}
    \setcounter{remark}{0}

    The statistical ensemble (also known as the Gibbs ensemble; for a comprehensive physical understanding of the ensemble, see  \cite{penrose1979foundations}) plays an essential role in studying statistical physics. 
Mathematically, it frequently is reduced to how evolving the distribution of initial random variables along some Hamiltonian systems over time. 
Joel L Lebowitz and Oliver Penrose \cite{lebowitz1973modern,walters2000introduction} provided an ergodic condition for the relaxation of the orbital ensemble of fixed energy $E$ to a microcanonical ensemble of equilibrium statistical mechanics. Emil A Yuzbashyan \cite{yuzbashyan2016generalized}
established such the long-term ensemble for Hamiltonian systems including quantum ones. In recent years, mathematicians and physicists have made significant strides in the study of ensembles. In \cite{goldfriend2019equilibration}, Goldfriend Tomer and Kurchan Jorge focused on the classical Fermi-Pata-Ulam-Tsingou (FPU) chain problem in their investigation of the slow relaxation problem of isolated quasi-integrable systems. Through numerical simulations, they demonstrated how the slow drift in Toda motion integral space governs the relaxation of the FPU chain in the direction of equilibrium. Damien Barbier et al. demonstrated that the Generalized Gibbs Ensemble produced by N conserved charges in the involution captures the long-time average of all significant observables of the soft model following parameter mutation \cite{barbier2020non}.
In \cite{matsui2020nonequilibrium}, Chihiro Matsui demonstrated that the Generalized Gibbs Ensemble of the gapless XXZ model is not linearly independent, but rather consists of conserved values that are functionally independent. Also revealed was the physical significance of spin-flip non-invariant conservation. 
    In \cite{mitchell2019weak}, Chad Mitchell showed the long-term behavior of bounded orbits associated with the initial ensemble in a nondegenerate integrable Hamiltonian system, and provided the requirements for the ensemble's convergence in the form of action angle to the equilibrium state.

    Notably, the differential equations explored in the preceding work are all idealized, i.e., they are continuous and differentiable at all times.

    In this paper, differ from the preceding ones, we consider the statistical ensemble problem for integrable Hamiltonian systems in which the action variable is modified by periodic transitions and almost periodic transitions. This abrupt transitions have significant physical relevance, as it is widely known that many practical physical processes in nature are frequently accompanied by a short-term disturbance 
    which results in some state transition. And in this regard, there have been some well-known outcomes, such as, in \cite{figalli2016weak}, Figalli et al. addressed the optimal switching problem, in which the switching occurs at a specific period by establishing the weak KAM theory and Aubry Mather theory for the problem. 
    

    Our primary concern is what changes will occur in the ensemble's time-averaged final output if it is abruptly transitions throughout its evolution. It is possible to determine the final direction of the statistical ensemble when systems suffer discontinuous transitions. We can give a positive answer under certain conditions. The asymptotic behavior of ensemble is also demonstrated in this article.

    The main technology of this paper is the {\it {Riemann-Lebesgue lemma in time-average form}}. The technique described in Reference \cite{mitchell2019weak} is no longer relevant when the transitions is periodic because the action variable $I$ suffers abrupt changes over time. This necessitates the addition of new assumptions in order to create a more generic lemma, which we refer to as the {\it {Riemann-Lebesgue lemma in time-average form}}. It is worth mentioning that this lemma is consistent with the original results in undisturbed situations. As for the almost periodic case, we no longer rely on this lemma, but obtain the results by comparing it with the periodic case.

    The present paper is organized as follows: In the second section, we present the symbols, definitions, and one-parameter flows of dynamic systems, as well as a key lemma utilized in this paper: the "Riemann-Lebesgue lemma in time-average form"; In the third section, we expand the observable function by Fourier transformation and deform it in order to determine the limit and integral; In the fourth section, the paper's key findings and supporting evidence are presented; In the fifth part, we generalize the periodic results to the almost periodic case. Also included are two appendices.

    \section{Preliminaries}
    This section is divided into three parts. In the first part, we provide some of the main definitions and notations used in this article. In the second part, we give the dynamic system and its corresponding one-parameter flow and the inverse map of the one-parameter flow  studied in this paper. In the third part, we give the lemmas needed to prove our results.
    It is worth noticing that, in order to condense the article in the following paragraphs, we will sometimes introduce new symbols and provide thorough explanations, which will not influence the comprehension or findings.

    Consider an integrable Hamiltonian system with time $T$-periodic transitions
    \begin{align}\label{HI}
    \begin{bmatrix}
                      \dot{I}\\
                      \dot{\theta}
    \end{bmatrix}\quad &= \quad
    \begin{bmatrix}
        0\\
        \omega\left(I\right)
    \end{bmatrix}, \qquad &t \neq \tau_{k};\\
    \Delta I\quad & = \quad \mathscr{I}_{\tau_{k}}, \qquad &t = \tau_{k};\\
    I\left(0\right)\quad & = \quad   I\left(mT\right), \qquad &m \in \mathbb{N},
    \end{align}
 where $I \in \Omega \subset \mathbb{R}^n,\theta \in \mathbb{T}^n\left(=\mathbb{R}^n/2\pi \mathbb{Z}^n\right)$, $\omega: \Omega_{\infty} \rightarrow \mathbb{T}^{n}$ ,
    \begin{align}\nonumber
        I\left(t\right) &= I\left(\tau_{k-1}\right)+\mathscr{I} _{\tau_{k}},\qquad
        0= \tau_{0} < \tau_{1} < \tau_{2} < \cdots < \tau_{k} \leq t < \tau_{k+1}<\cdots,\\\nonumber
        \mathscr{I}_{\tau_{0}}&=0,
        \end{align}
$\tau_{k} \geq 0 , \ k \in \mathbb{N} $ are constants, and $\Omega_{\infty}$ will be defined later. We assume that the transitions are $T-periodic$, which means: $\exists \ p \in \mathbb{N}^{*}, \exists \ T >0 $ such that $\forall \ i \in \mathbb{N}$
    \begin{align}\nonumber
            \left\{
                \begin{aligned}
          \tau_{i+p} - \tau_{i} &= T,\\
          \mathscr{I}_{\tau_{i+p}} &= \mathscr{I}_{\tau_{i}}.
                \end{aligned}
            \right.
    \end{align}
    It is easy to know that the action variable $I$ in (\ref{HI}) satisfies
    \begin{align}\nonumber
        I\left(t\right) = I\left(0\right) + \sum_{i=0}^{k}\mathscr{I}_{\tau_{i}}, \qquad 0=\tau_{0} < \tau_{k} \leq t~(\mod T)<\tau_{k+1} \leq \tau_{p}.
    \end{align}
    \subsection{Notations and definitions}

    Some basic definitions and notations used in this paper will be given here. One should notice that, unless otherwise specified, the same notations below have the same meanings. Besides, if necessary, we will introduce new notations and make detailed explanations in individual proofs.

    Since the action variable set $\Omega$ will change under the transitions, we note
    \begin{align}\nonumber
        \Omega_{\tau_{k}} = \{I : I_{0}+\sum_{i=0}^{k}\mathscr{I}_{\tau_{i}} , \ I_0 \in \Omega \}, \qquad 0 \leq k \leq p-1.
    \end{align}
    Moreover, because of  the $T$-periodicity of the transitions and the dynamic system we study in this paper, we set
    \begin{align}\nonumber
        \Omega_{\infty} = \bigcup_{i=0}^{p-1} \Omega_{\tau_{i}}.
    \end{align}
    In the following, \ $\forall t > 0 , \ 0 = \tau_{0} < \tau_{k} \leq t~ (\mod{T}) < \tau_{k+1} \leq \tau_{p}$
    \begin{align}\nonumber
        \Omega_{t}=\Omega_{\tau_{k}}.
    \end{align}
    In addition, the meaning of $A_{\tau_{k}} $ and $A_{t}$ for any $A \subset \Omega $ are similar to that of $\Omega_{\tau_{k}} $ and $\Omega_{t}$ respectively.
    In the integrable Hamiltonian system with time $T$-periodic transitions, the changes of the action variable and the angle variable are different from those of the general integrable Hamiltonian system. Take $\left(I,\theta\right)$, and we record the changes of the action variable and the angle variable in a whole period as $\Delta I_{T}$, $\Delta\theta_{T}\left(I\right)$. Then
    \begin{align}\nonumber
        \Delta I_{T} &= \sum_{i=0}^{p-1}\mathscr{I}_{\tau_{k}}, \\
        \Delta \theta_{T}\left(I\right) &= \sum_{i=0}^{p-1}\left(\tau_{i+1}-\tau_{i}\right)\omega\left(I+\sum_{j=0}^{i}\mathscr{I}_{\tau_{j}}\right).\nonumber
    \end{align}
    Moreover we set
    \begin{align}\nonumber
        \Delta \theta_{T}'\left(I\right)=\sum_{i=0}^{p-1}\left(\tau_{i+1}-\tau_{i}\right)\omega_{I}\left(I+\sum_{j=0}^{i}\mathscr{I}_{\tau_{j}}\right),
    \end{align}
    and
    \begin{align}\label{2.10}
        \overline{\omega}_{T}\left(I\right)=\frac{1}{T}{\Delta \theta}_{T}\left(I\right).
    \end{align}
    We call (\ref{2.10}) $the \ average \ change \ of \ the \ angle \ variable \ \theta  \ in \ one \ periodic \ T.$

    One should notice that when $0=\tau_{0} < \tau_{k} \leq t~ (\mod{T}) < \tau_{k+1} \leq \tau_{p}, \ \left(t \gg \tau_{p}\right)$,
    \begin{align}\nonumber
            I\left(t\right) &= I+ \sum_{i=0}^{k} \mathscr{I}_{\tau_{i}},\\
            \theta\left(t\right) &= \theta+m\Delta \theta_{T}\left(I\right)+ \sum_{i=0}^{k-1}\left(\tau_{i+1}-\tau_i\right)\omega\left(I + \sum_{j=0}^{i}\mathscr{I}_{\tau_j}\right)+\left(t-\tau_{k}\right)\omega\left(I+\sum_{i=0}^{k}\mathscr{I}_{\tau_{i}}\right),\nonumber
        \end{align}
        where $\left[\frac{t}{T}\right]= m$, $\left[\cdot\right]$ denotes Gauss function.

    We treat (\ref{HI}) as a random initial value problem. Suppose that the initial condition of (\ref{HI}) is modeled as a pair of dependent random vectors
    described by a joint probability density $f_0 \in L^{1}\left(\Omega \times \mathbb{T}^{n}\right)$. We note that $\varphi _{t} \left(I,\theta\right)$ as the one-parameter flow of system (\ref{HI}) and the
    point $\left(I,\theta\right) = \varphi_{t}\left(I_0,\theta_0\right),$ $\Omega \times \mathbb{T}^{n} \ni  \left(I_0,\theta_0\right)= \left(I\left(0\right),\theta\left(0\right)\right)$, at time $t \in \mathbb{R}^{+}$ is described by the following probability density $f_{t} \in L^{1}\left(\Omega_{t} \times \mathbb{T}^{n}\right):$
    \begin{align}\nonumber
     f_{t}\left(I,\theta\right) = f_{0}\left(\varphi^{-1}_{t}\left(I,\theta\right)\right).
    \end{align}
    Before giving the probability measure induced by the probability density funtion, we first give the definition of the angular average of $F\left(I,\cdot\right) \in L^1\left(\mathbb{T}^{n}\right)$.
    \begin{definition}
        Suppose $F\left(I,\theta\right) \in C \left(M \times \mathbb{T}^{n}\right), M \subset \mathbb{R}^{n},$ and therefore $F\left(I,\cdot\right)$ lies in  $L^{1}\left(\mathbb{T}^{n}\right)$. The angle avreage of $F\left(I,\theta\right)$ is defined as :
        \begin{align}\nonumber
            \overline{F}\left(I,\theta\right) = \frac{1}{\left(2\pi\right)^{n}}\int_{\mathbb{T}^{n}}F\left(I,\theta\right) d \theta.
        \end{align}
    \end{definition}
    \begin{definition} \label{pm}
       Let $ \mathcal{B}\left(\Omega \times \mathbb{T}^n\right) $ denote the $\sigma$- algebra of Borel subsets of $\Omega \times \mathbb{T}^{n}$. Let $\varphi_{t}$ be the one-parameter map given by system (\ref{HI}), $f_{t}$ and $f_{0}$ defined as before. Define corresponding probability measures given by:
        \begin{align}\nonumber
            P_{t}\left(A_{t}\right) = \int_{A_t}f_t\left(I,\theta\right)dId\theta, \  \  \ \overline{P}_{t}\left(A_t\right) = \int_{A_t} \overline{f}_t\left(I,\theta\right)dId\theta, \ \ A \in \mathcal{B}\left(\Omega \times \mathbb{T}^n\right) .
        \end{align}
        Especially, since $\varphi_{t}$ is piecewise constant with respect to the variable $I$  which means : if \ $\left(I,\theta\right) = \varphi_{t}\left(I',\theta '\right)$,
        \begin{align}\nonumber
            \dot{I}\left(t\right)=0, \qquad \ \tau_{i} \leq t~(\mod T) < \tau_{i+1}, \ i =0 ,1 ,2 , \cdots , p-1,
        \end{align}
         we note
        \begin{align}\nonumber
            P_{\tau_{i}}\left(A_{\tau_{i}}\right) = \int_{A_{\tau_{i}}}f_{\tau_{i}}\left(I,\theta\right)dId\theta, \  \  \ \overline{P}_{\tau_{i}}\left(A_{\tau_{i}}\right) = \int_{A_{\tau_{i}}} \overline{f}_{\tau_{i}}\left(I,\theta\right)dId\theta, \ \ A \in \mathcal{B}\left(\Omega \times \mathbb{T}^n\right).
        \end{align}
    \end{definition}
    In the fourth section of this paper, we will give the weak convergence of the probability measure induced by the probability density function under the one-parameter flow $\varphi_{t}$ of system (\ref{HI}). Since the set of action variables $\Omega$ will change under the action of the one-parameter flow $\varphi_{t}$, it is necessary to redefine the weak convergence of the probability measure. First, we review the following definitions :
    \begin{definition}\cite{billingsley2013convergence}
       Let $X$ be a metric space and let $\mathcal{B}\left(X\right)$ denote the $\sigma$- algebra of Borel subsets of $X$. A sequence $\{P_{n}\}$ of probability measures defined on the measurable space $\left(X,\mathcal{B}\left(X\right)\right)$ is said to converge weakly to a probability measure $P$ , also defined on $\left(X,\mathcal{B}\left(X\right)\right)$, if for any $g \in C_{b}\left(X\right)$ we have:
       \begin{align}\nonumber
        \lim_{n \rightarrow +\infty} \int_{X} g dP_{n} = \int_{X} g dP.
       \end{align}
       In this case, we write $P_{n} \Rightarrow P$.
    \end{definition}
    The action spaces of probability measures induced by $f_{0}$ and $f_{t}$ change under the one-parameter flow, but fortunately, under the $T$-periodic transitions (The transitions are only related to time, which makes the change of $I$ with respect to $t$ piecewise constant and periodic.), the change of action variable set $\Omega$ is limited. The change of action variable set $\Omega$ makes it meaningless to directly consider the long-term behavior of probability measures. Therefore, we give the time-average version of the above definition under the action of the one-parameter flow $\varphi_{t}$.
    \begin{definition}
        Let $\mathcal{B}\left(\Omega \times \mathbb{T}^{n} \right)$ denote the $\sigma $- algebra of Borel subsets of $\Omega \times \mathbb{T}^{n}$. Let $P_{t}$ and $\Omega_{t}$ be defined as before. $P_{t}$ which is dependent on $t$ is said to converge weakly to a probability measure $P_{\tau}$ in time-average sense, if for any $g \in C_{b}\left(\Omega_{\infty} \times \mathbb{T}^{n}\right)$, we have:
     \begin{align}\nonumber
        \lim_{t \rightarrow +\infty} \frac{1}{l} \int_{0}^{l} \left[\int_{\Omega_{t} \times \mathbb{T}^{n}}gdP_{t}\right] dt= \int_{\Omega_{\tau}}g dP_{\tau}.
     \end{align}
     In this case, we write $P_{t} \overset{\mathscr{T}}{\Rightarrow} P_{\tau}.$
    \end{definition}
    \begin{remark}
        Under the action of the one-parameter flow $\varphi_{t}$, the probability measure $P_{t}$ is $T$-periodic and piecewise constant which means: $P_{t} = P_{\tau_{k}}, \ 0=\tau_{0} < \tau_{k} \leq t \mod{T} < \tau_{k+1} \leq \tau_{p}$. If
        \begin{align}\nonumber
            \lim_{t \rightarrow +\infty} \frac{1}{T} \int_{0}^{T} \left[\int_{\Omega_{t} \times \mathbb{T}^{n}}gdP_{t}\right] dt= \frac{1}{T} \sum_{i=0}^{p-1}\left(\tau_{i+1}-\tau_{i}\right)\int_{\Omega_{\tau_{k}}}g dP_{\tau_{k}},
        \end{align}
        we write $P_{t} \overset{\mathscr{T}}{\Rightarrow} \frac{1}{T}\sum_{i=0}^{p-1}\left(\tau_{i+1}-\tau_{i}\right)P_{\tau_{i}}$.
    \end{remark}

    In classical statistical mechanics, for a given macrostate $\left(N,V,E\right)$, a statistical system, at any time $t$, is equally likely to be in any one of an extremely large number of distinct microstates. As time passes, the system continually switches from one microstate to another, with the result that, over a reasonable span of time, all one observes is a behavior "averaged" over the variety
    of microstates through which the system passes \cite{pathria2016statistical}.
    \begin{remark}
        The meaning of "$N, V, E$" here is different from that in other parts of this paper. "$N$" represents the number of particles; "$V$" represents the space that the particles confined to; "$E$" represents the total energy of the system.
    \end{remark}
    Given a continuous observable $G$, the behavior of its expected value as an average in space  is an important problem in studying ensembles.
    \begin{definition}\label{def2.5}
         Let $G \in C\left(\Omega_{\infty} \times \mathbb{T}^{n}\right)$. Let $\varphi_{t}$ be the one-parameter flow of system (\ref{HI}), $f_{t}$ and $\Omega_{t}$ defined as before. The expected value of  $G$ under the one-parameter flow $\varphi_{t}$ at time t $ \left(0<\tau_{k} < t~(\mod{T}) \leq \tau_{k+1} \leq \tau_{p} \right)$ is given by:
    \begin{align}\nonumber
        <G>_{t} = \int_{\Omega_{\tau_{k}} \times \mathbb{T}^{n}} G\left(I,\theta\right)f_{t}\left(I,\theta\right)dId\theta.
    \end{align}
    \end{definition}
    The Fourier transform expressed as follows plays an important role in studying  $<G>_{t}$:
    \begin{definition}
         Let $G\in C\left( \Omega_{\infty} \times \mathbb{T}^{n}\right)$. Define function $\hat{G}: \Omega_{\infty} \times \mathbb{Z}^{n} \rightarrow \mathbb{C}$ of $G$:
        \begin{align}\nonumber
            \hat{G}\left(I,\vec{n}\right) = \frac{1}{\left(2\pi\right)^n}\int_{\mathbb{T}^{n}} G\left(I,\theta\right)\exp\left(-\sqrt{-1}<\vec{n},\theta>\right)d\theta.
        \end{align}
    \end{definition}

    \subsection{The one-parameter flow  and its inverse}
    In the above description, it is not difficult to find that the one-parameter flow of system (\ref{HI}) plays an important role in this paper. For the sake of brevity and clarity, we only give the one-parameter flow during $t \gg \tau_{p}$. One can see the specific derivation process of the one-parameter flow in  Appendix A.

    Let $t \gg \tau_{p}, \ \left[\frac{t}{T}\right]= m , \ 0=\tau_{0} < \tau_{k} \leq t~ (\mod{T}) < \tau_{k+1} \leq \tau_{p}$.
    We can define  one-parameter map $\varphi_{t}$ on $\Omega\times \mathbb{T}^n$ :
    \begin{align}\nonumber
            \varphi _t : \quad \Omega \times \mathbb{T}^n & \rightarrow \Omega_{\tau_{k}} \times \mathbb{T}^n, \\
            \left(I,\theta\right)  &\rightarrow \left(I+\sum_{i=0}^{k}\mathscr{I}_{\tau_{i}},\theta+m\Delta \theta_{T}\left(I\right)+ \sum_{i=0}^{k-1}\left(\tau_{i+1}-\tau_i\right)\omega\left(I + \sum_{j=0}^{i}\mathscr{I}_{\tau_j}\right)+\left(t-\tau_{k}\right)\omega\left(I+\sum_{i=0}^{k}\mathscr{I}_{\tau_{i}}\right)\right).\nonumber
    \end{align}
    Note that the Jacobian matrix of $\varphi_{t}$:
    \begin{align}\nonumber
        D\varphi_{t}=
        \begin{bmatrix}
            \bm{1_n}&0\\
            m\Delta \theta_{T}'\left(I\right)+ \sum_{i=0}^{k-1}\left(\tau_{i+1}-\tau_i\right)\omega_{I}\left(I + \sum_{j=0}^{i}\mathscr{I}_{\tau_j}\right)+\left(t-\tau_{k}\right)\omega_{I}\left(I+\sum_{i=0}^{k}\mathscr{I}_{\tau_{i}}\right)&\bm{1_n}.
        \end{bmatrix},
        \qquad det\left(D\varphi_{t}\right) = 1.
    \end{align}

    Next, we solve the inverse map of the one-parameter flow,  which plays an important role in calculating determinant of coordinate transformation, when variable replacement formula is used in integral operation.

    The inverse map of $\varphi_{t}$ is defined as :
    \begin{align}\nonumber
            \varphi _t^{-1} : \quad \Omega_{\tau_{k}} \times \mathbb{T}^n & \rightarrow \Omega \times \mathbb{T}^n \\
            \left(I,\theta\right)  &\rightarrow\left(\varphi _t^{-1}\left(I,\theta\right)_{\left(I\right)},\varphi _t^{-1}\left(I,\theta\right)_{\left( \theta \right)}\right).\nonumber
    \end{align}
    On the one hand,
    \begin{align}\nonumber
        \varphi_{t}\circ \varphi_{t}^{-1}\left(I,\theta\right) = \left(I,\theta\right),
    \end{align}
    on the other hand,
    \begin{align}\nonumber
        &\varphi_{t}\left(\varphi _t^{-1}\left(I,\theta\right)_{\left(I\right)},\varphi _t^{-1}\left(I,\theta\right)_{\left( \theta \right)}\right)\\\nonumber
        &=( \varphi _t^{-1}\left(I,\theta\right)_{\left(I\right)}+\sum_{i=0}^{k}\mathscr{I}_{\tau_{i}},\\\nonumber
        &\varphi _t^{-1}\left(I,\theta\right)_{\left( \theta \right)}+m\Delta \theta_{T}\left(\varphi _t^{-1}\left(I,\theta\right)_{\left(I\right)}\right)+ \sum_{i=0}^{k-1}\left(\tau_{i+1}-\tau_i\right)\omega\left(\varphi _t^{-1}\left(I,\theta\right)_{\left(I\right)}+ \sum_{j=0}^{i}\mathscr{I}_{\tau_j}\right)+\left(t-\tau_{k}\right)\omega\left(\varphi _t^{-1}\left(I,\theta\right)_{\left(I\right)}+\sum_{i=0}^{k}\mathscr{I}_{\tau_{i}}\right)),\nonumber
        \end{align}
    which means
    \begin{align}\nonumber
        \left\{
        \begin{aligned}
            \varphi _t^{-1}\left(I,\theta\right)_{\left(I\right)} &=I-\sum_{i=0}^{k}\mathscr{I}_{\tau_{i}} ,\\
            \varphi _t^{-1}\left(I,\theta\right)_{\left( \theta \right)} &=\theta - m\Delta \theta_{T}\left(I-\sum_{i=0}^{k}\mathscr{I}_{\tau_{i}}\right)- \sum_{i=0}^{k-1}\left(\tau_{i+1}-\tau_i\right)\omega\left(I-\sum_{j=i+1}^{k}\mathscr{I}_{\tau_{j}}\right)-\left(t-\tau_{k}\right)\omega\left(I\right).
        \end{aligned}
        \right.
    \end{align}
    Now we get the inverse map $\varphi_{t}^{-1}$ on $\Omega_{\tau_{k}} \times \mathbb{T}^{n}$:
    \begin{align}\nonumber
        \begin{aligned}
            \varphi _t^{-1} : \quad \Omega_{\tau_{k}} \times \mathbb{T}^n & \rightarrow \Omega \times \mathbb{T}^n \\
            \left(I,\theta\right)  &\rightarrow\left(I-\sum_{i=0}^{k}\mathscr{I}_{\tau_{i}} ,\theta - m\Delta \theta_{T}\left(I-\sum_{i=0}^{k}\mathscr{I}_{\tau_{i}}\right)- \sum_{i=0}^{k-1}\left(\tau_{i+1}-\tau_i\right)\omega\left(I-\sum_{j=i+1}^{k}\mathscr{I}_{\tau_{j}}\right)-\left(t-\tau_{k}\right)\omega\left(I\right)\right).
        \end{aligned}
    \end{align}
    Moreover
    \begin{align}\nonumber
        D\varphi_t^{-1}=
        \begin{bmatrix}
            \bm{1_n}&0\\
            - m\Delta \theta_{T}'\left(I-\sum_{i=0}^{k}\mathscr{I}_{\tau_{i}}\right)- \sum_{i=0}^{k-1}\left(\tau_{i+1}-\tau_i\right)\omega_{I}\left(I-\sum_{j=i+1}^{k}\mathscr{I}_{\tau_{j}}\right)-\left(t-\tau_{k}\right)\omega_{I}\left(I\right)&\bm{1_n}
        \end{bmatrix},
    \end{align}
    \begin{equation*}
        det\left(D\varphi_t^{-1}\right)=1.
    \end{equation*}

    \subsection{Lemmas}

    In this part , we give the key lemma to prove the main results in this paper : $Riemann-Lebesgue \ lemma  \ in \ time-average \ form$ to prove the results of this paper, and we also give some other necessary lemmas. These lemmas are mentioned in \cite{mitchell2019weak}, but in this paper, the integrable Hamiltonian system is disturbed by transitions, so when considering its ensemble problem, we must make some modifications.
    Some well-known theorems used in this paper are given in Appendix B.

    We first give a simple lemma, whose proof is basic.
    \begin{lemma}\label{321}
       Assume $M\left(t\right)$ is a bounded function defined on $\mathbb{R}^{+}$ with bound $M \gg 1$ and
        $$\lim_{t \rightarrow +\infty} M\left(t\right) = 0. $$
        Then
        \begin{equation*}
            \lim_{l \rightarrow +\infty} \frac{1}{l}\int_{0}^{l}M\left(t\right)dt = 0.
        \end{equation*}
    \end{lemma}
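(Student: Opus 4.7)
The plan is to prove this via a standard $\varepsilon$--splitting argument, exploiting the fact that the integrand is uniformly bounded on $[0,\infty)$ while being eventually small. Fix $\varepsilon > 0$. Since $\lim_{t\to+\infty} M(t) = 0$, I can choose $T_{0} = T_{0}(\varepsilon) > 0$ such that $|M(t)| < \varepsilon/2$ for every $t \geq T_{0}$.

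Next I would split the averaging integral at $T_{0}$. For any $l > T_{0}$,
\begin{align*}
\left| \frac{1}{l} \int_{0}^{l} M(t)\, dt \right|
&\leq \frac{1}{l} \int_{0}^{T_{0}} |M(t)|\, dt + \frac{1}{l} \int_{T_{0}}^{l} |M(t)|\, dt \\
&\leq \frac{M \cdot T_{0}}{l} + \frac{\varepsilon}{2} \cdot \frac{l - T_{0}}{l} \\
&\leq \frac{M \cdot T_{0}}{l} + \frac{\varepsilon}{2},
\end{align*}
using the uniform bound $|M(t)| \leq M$ on the first piece and the tail estimate on the second. The first term is a constant (depending on $\varepsilon$) divided by $l$, so it can be made arbitrarily small by taking $l$ large.

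Concretely, choose $l_{0} = \max\{T_{0}, 2 M T_{0}/\varepsilon\}$. Then for all $l \geq l_{0}$ the displayed quantity is at most $\varepsilon/2 + \varepsilon/2 = \varepsilon$. Since $\varepsilon > 0$ was arbitrary, the time average tends to zero, giving the claim. There is no real obstacle here; the only point to be careful about is keeping the two contributions (initial segment of bounded integrand versus tail with small integrand) cleanly separated so that the bound $M \gg 1$ enters only through the vanishing prefactor $T_{0}/l$ and not into the $\varepsilon/2$ tail term.
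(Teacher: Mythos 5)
Your proof is correct and follows essentially the same route as the paper: split the averaging integral at the threshold beyond which $|M(t)|$ is small, bound the initial segment by $M\,T_{0}/l$, and absorb the tail into an $\varepsilon$-term (the paper uses $\varepsilon/M$ for the tail where you use $\varepsilon/2$, a purely cosmetic difference). No issues.
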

    \begin{proof}
        By the assumptions of the $M\left(t\right)$ : $\forall \epsilon > 0$
        $$\exists \  t'_0\left(\epsilon\right) >0,  \quad s.t.  \ \forall t > t'_0\left(\epsilon \right), \ |M\left(t\right)| < \frac{\epsilon}{M}.$$
    Take $t_0 \left(\epsilon\right)= max \left\{t'_0\left(\epsilon\right), \left(M^2-\epsilon\right)t'_0\left(\epsilon\right)/ \left[\left(M-1\right)\epsilon\right]\right\}$, and we get: \ $\forall l > t_0\left(\epsilon\right)$,
    \begin{align}\nonumber
        \begin{aligned}
        \frac{1}{l}\big| \int_{0}^{l} M\left(t\right) dt \ \big| &\leq \frac{1}{l}\big| \left(\int_{0}^{t_0\left( \epsilon \right)} + \int_{t_0\left(\epsilon\right)}^{l}\right) M\left(t\right) dt \ \big| \\
    &\leq \frac{1}{l}\int_{0}^{t_0\left(\epsilon\right)}\big | M\left(t\right)\big | dt + \frac{1}{l}\int_{t_0\left(\epsilon\right)}^{l}\big | M\left(t\right)\big | dt\\
    &\leq Mt_0\left(\epsilon\right)/l + \left(\epsilon / M \right)\left[l-t_0\left(\epsilon\right)\right]/l\\
    &< \epsilon.
        \end{aligned}
    \end{align}
    \end{proof}
    We call the following lemma $ Riemann-Lebesgue \ lemma \ in  \ time-average \ form $  which is the linchpin to prove our main results in this paper. One can see the classical $Riemann-Lebesgue~ lemma$ in Appendix B.
    \begin{lemma}\label{il}
        Let $\Omega \subset \mathbb{R}^n$ be open. Suppose that $F\left(I,t\right)$ and $\phi \left(I,t\right)$ satisfy the following:

        $\bm{\left(1\right)}$ For any fixed $t>0$, $F\left(I,t\right) \in \bm{L}^1\left(\Omega\right)$;

        $\bm{\left(2\right)}$ $F\left(I,t\right)$ is a periodic function on variable $t$, which means there exists $T>0$ such that $F\left(I,t+T\right)= F\left(I,t\right)$
        $\forall \left(I,t\right) \in \Omega \times \mathbb{R}^{+}$;

        $\bm{\left(3\right)}$ $F\left(I,t\right)$ is piecewise constant with respect to variable t for any fixed I,

        $ i.e. \ \exists \ 0 = \tau_0 <\tau_1 <\tau_2<\cdots<\tau_p, \tau_p - \tau_0 = T \ s.t.$
        \begin{align}\nonumber
            \begin{aligned}
                F\left(I,t\right)=F_1\left(I\right),  \qquad&  \tau_0 \leq t < \tau_1\\
                F\left(I,t\right)=F_2\left(I\right),  \qquad&  \tau_1 \leq t < \tau_2\\
                \vdots \qquad&\\
                F\left(I,t\right)=F_p\left(I\right),  \qquad&  \tau_{p-1} \leq t < \tau_p
            \end{aligned}
        \end{align}
        $\forall I \in \Omega$;

        $\bm{\left(4\right)}$ $\phi\left(I,t\right) \in \bm{C}^2\left(\Omega\right)$ and we can find an $\omega \left(I\right) \in \bm{C}^2\left(\Omega\right), \nabla_{I} \omega\left(I\right) \neq 0$ such that
        \begin{equation*}
            \frac{1}{t}\int_{0}^{t}\phi\left(I,\tau\right)d\tau-\omega\left(I\right) = \bm{o}\left(1/t\right).
        \end{equation*}
         Then
        \begin{equation*}
            \lim_{l \rightarrow +\infty} \frac{1}{l}\int_{0}^{l}\int_{\Omega}F\left(I,t\right)\exp\left(\sqrt{-1}\int_{0}^{t}\phi\left(I,\tau\right)d\tau\right)dIdt=0.
        \end{equation*}
    \end{lemma}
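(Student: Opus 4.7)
The plan is to reduce the claim to showing pointwise decay of the inner integral in $t$, then invoke Lemma \ref{321}. Set
\begin{equation*}
M(t) := \int_{\Omega} F(I,t)\exp\left(\sqrt{-1}\int_{0}^{t}\phi(I,\tau)d\tau\right) dI .
\end{equation*}
Because $F(\cdot,t)$ cycles through only $p$ functions $F_{1},\dots,F_{p}\in L^{1}(\Omega)$, one has the uniform bound $|M(t)|\le \max_{1\le k\le p}\|F_{k}\|_{L^{1}(\Omega)}<\infty$. Hence Lemma \ref{321} will do the time-averaging step for us, provided we can establish that $M(t)\to 0$ as $t\to+\infty$.

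To show $M(t)\to 0$, I would factor the oscillatory phase. Assumption $\bm{(4)}$ gives
\begin{equation*}
\int_{0}^{t}\phi(I,\tau)d\tau = t\,\omega(I)+\eta(I,t),\qquad \eta(I,t)=o(1)\ \text{as}\ t\to+\infty,
\end{equation*}
so that $\exp(\sqrt{-1}\int_{0}^{t}\phi\,d\tau)=e^{\sqrt{-1}t\omega(I)}\,e^{\sqrt{-1}\eta(I,t)}$. At any fixed $t$, $F(I,t)=F_{k(t)}(I)$ for some $k(t)\in\{1,\dots,p\}$, so it suffices to prove, for every $k$,
\begin{equation*}
\lim_{t\to+\infty}\int_{\Omega}F_{k}(I)\,e^{\sqrt{-1}t\omega(I)}\,e^{\sqrt{-1}\eta(I,t)}\,dI=0 .
\end{equation*}
Writing $e^{\sqrt{-1}\eta}=1+(e^{\sqrt{-1}\eta}-1)$ splits this into a main term $\int_{\Omega}F_{k}(I)e^{\sqrt{-1}t\omega(I)}dI$ and an error term. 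The classical (non-stationary phase) Riemann–Lebesgue lemma, which applies exactly because $F_{k}\in L^{1}$ and $\nabla_{I}\omega\neq 0$ on $\Omega$ by hypothesis $\bm{(4)}$, forces the main term to $0$. For the error term, the pointwise decay $|\,e^{\sqrt{-1}\eta(I,t)}-1\,|\le |\eta(I,t)|\to 0$ together with the uniform domination $|F_{k}(I)(e^{\sqrt{-1}\eta}-1)|\le 2|F_{k}(I)|\in L^{1}(\Omega)$ lets the Lebesgue dominated convergence theorem push the error to $0$.

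Combining the two pieces yields $\int_{\Omega}F_{k}(I)e^{\sqrt{-1}t\omega(I)}e^{\sqrt{-1}\eta(I,t)}dI\to 0$ for each of the finitely many indices $k$; since $k(t)$ always lies in this finite set, $M(t)\to 0$ as $t\to+\infty$. Applying Lemma \ref{321} to the bounded function $t\mapsto M(t)$ then gives $\frac{1}{l}\int_{0}^{l}M(t)dt\to 0$, which is the stated conclusion.

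The main obstacle I anticipate is the error-term estimate: the hypothesis only furnishes pointwise-in-$I$ decay of $\eta(I,t)$, so a uniform bound on $\sup_{I}|\eta(I,t)|$ is not directly available and one must rely on dominated convergence through the $L^{1}$ envelope $2|F_{k}|$. A secondary delicacy is invoking the non-stationary phase Riemann–Lebesgue lemma on a possibly unbounded open set $\Omega$; this is handled by the standard argument of approximating $F_{k}$ in $L^{1}$ by compactly supported smooth functions and integrating by parts against the non-vanishing vector field $\nabla_{I}\omega$, which is the content of the classical result recalled in Appendix B.
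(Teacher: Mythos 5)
Your proposal is correct and shares the paper's skeleton: define $M(t)=\int_{\Omega}F(I,t)\exp\bigl(\sqrt{-1}\int_{0}^{t}\phi(I,\tau)\,d\tau\bigr)dI$, reduce the time average to the pointwise statement $M(t)\to 0$ via Lemma \ref{321}, exploit that $F(\cdot,t)$ ranges over the finite family $F_{1},\dots,F_{p}$, and dispose of the dominant oscillation $e^{\sqrt{-1}\,t\omega(I)}$ with the non-stationary-phase Riemann--Lebesgue lemma. Where you genuinely diverge is in the treatment of the phase correction $\eta(I,t)=\int_{0}^{t}\phi(I,\tau)\,d\tau-t\omega(I)$. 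The paper first approximates $F_{i}$ by $\tilde F_{i}\in C_{c}^{\infty}(\Omega)$, restricts to the compact support $K_{i}$, and then pulls the factor $\bigl|\exp(\sqrt{-1}\int_{0}^{t}\phi\,d\tau)-\exp(\sqrt{-1}\,\omega(I)t)\bigr|$ outside the integral, bounding it by $\epsilon/[4m(K_{i})F_{i}]$; this step tacitly upgrades the pointwise-in-$I$ decay of hypothesis $\bm{(4)}$ to a uniform bound over $K_{i}$, which the hypothesis does not literally supply. You instead split $e^{\sqrt{-1}\eta}=1+(e^{\sqrt{-1}\eta}-1)$ and control the error term by dominated convergence against the envelope $2|F_{k}|\in L^{1}(\Omega)$, which needs only the pointwise decay actually stated in $\bm{(4)}$ and requires no prior reduction to compactly supported data (the $C_{c}^{\infty}$ approximation being absorbed into the cited classical lemma). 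Your route is thus slightly more economical and, on the error term, more faithful to the stated hypotheses; the paper's route yields an explicit $\epsilon$-threshold $t_{i}(\epsilon)$ at the cost of the implicit uniformity assumption. Both are valid proofs of the lemma as intended.
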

    \begin{proof}

        We set
        \begin{equation*}
            M\left(t\right) = \int_{\Omega}F\left(I,t\right)\exp\left(\sqrt{-1}\int_{0}^{t}\phi\left(I,\tau\right)d\tau\right)dI.
        \end{equation*}
        The lemma\ref{321} implies that we only need to prove $M\left(t\right) = 0 , as \ t \rightarrow +\infty$.

    Conbining assumptions $\bm{\left(1\right)}$ and $\bm{\left(3\right)}$ gives : $$\forall \ t>0, \ \exists \ 1\leq j \leq p, \quad s.t.\ F\left(I,t\right)= F_j\left(I\right), \ F_{j} \in \bm{L}^1\left(\Omega\right).$$
        For any $F_i\left(I\right) \in \bm{L}^1\left(\Omega\right)$, we prove the special case $\tilde{F}_{i}\left(I\right) \in \bm{C}_{c}^{\infty}\left(\Omega\right)$ first.

        It is easy to see, $\tilde{F}_{i}\left(I\right) \in \bm{C}_{c}^{\infty}\left(\Omega\right) \subset \bm{L}^1\left(\Omega\right)$, and there is a bounded and closed set $K_{i} \subset \Omega$, which depends on $i$, such
        that $\tilde{F}_i\left(I\right) = 0$ on $K_{i}^{c}$, where $K_{i}^{c} = \Omega \verb|\| K_{i}.$ Then lemma 0 in article \cite{mitchell2019weak} yields:
        \begin{equation*}
            \forall \epsilon > 0, \ \exists \  t_{\omega}^{i}\left(\epsilon\right) > 0,\quad s.t. \ \forall  t>t_{\omega}^{i}\left(\epsilon\right) ,  \quad  \big| \int_{\Omega}\tilde{F}_i\left(I\right)\exp\left(\sqrt{-1}\omega\left(I\right)t\right)dI\big| < \epsilon/4.
        \end{equation*}
        Moreover. using assumptions $\bm{\left(4\right)}$ and the continuity of the exponential function gives
        \begin{equation*}
            \exists \delta_i\left(\epsilon\right) > 0 , \ \exists t^i\left(\epsilon\right)>0, \quad s.t. \ \forall t >t^i\left(\epsilon\right), \ \big| \sqrt{-1}\frac{1}{t}\int_{0}^{t}\phi\left(I,\tau\right)d\tau \cdot t - \omega\left(I\right) t \big| < \delta_{i}\left(\epsilon\right),
        \end{equation*}
        \begin{equation*}
            \big| \ \exp\left[\sqrt{-1}\frac{1}{t}\int_{0}^{t}\phi\left(I,\tau\right)d\tau \cdot t\right] - \exp\left[\sqrt{-1}\omega\left(I\right)t\right] \ \big| < \epsilon / \left[4m\left(K_{i}\right)F_i\right],
        \end{equation*}
        where $\sup \limits_{I \in \Omega}\tilde{F}_{i}\left(I\right)= \sup \limits_{I \in K_{i}}\tilde{F}_{i}\left(I\right) \coloneqq F_{i}.$

        Set $t_i\left(\epsilon\right) = \max \left\{t_{\omega}^i\left(\epsilon\right),t^i\left(\epsilon\right)\right\}$. Then $\forall \ \tilde{F}_{i}\left(I\right) \in \bm{C}_c^{\infty}\left(\Omega\right), \ \forall \epsilon >0, \ \exists\ t_i\left(\epsilon\right) >0 \quad  s.t. \ \forall \ t>t_i\left(\epsilon\right)$
        \begin{equation}\label{144}
            \big| \int_{\Omega}\tilde{F}_{i}\left(I\right)\exp\left(\sqrt{-1}\int_{0}^{t}\phi\left(I,\tau\right)d\tau\right) dI \big| < \epsilon/2.
        \end{equation}
    Actually,
    \begin{align}\nonumber
        \begin{aligned}
      &\big| \int_{\Omega}\tilde{F}_{i}\left(I\right)\exp\left(\sqrt{-1}\int_{0}^{t}\phi\left(I,\tau\right)d\tau\right) dI \big| \\
     &\leq \big| \int_{K_{i}}\tilde{F}_{i}\left(I\right)\exp\left(\sqrt{-1}\int_{0}^{t}\phi\left(I,\tau\right)d\tau\right) dI \big| \\
     &\leq \big| \int_{K_{i}}\tilde{F}_{i}\left(I\right)\left[\exp\left(\sqrt{-1}\int_{0}^{t}\phi\left(I,\tau\right)d\tau\right)-\exp\left(\sqrt{-1}K_{i}\left(I\right)t\right)\right] dI \big|+\big| \int_{K_{i}}\tilde{F}_{i}\left(I\right)\exp\left(\sqrt{-1}\omega\left(I\right)t\right)dI\big|\\
     &\leq \big| \exp\left(\sqrt{-1}\int_{0}^{t}\phi\left(I,\tau\right)d\tau\right)-\exp\left(\sqrt{-1}\omega\left(I\right)t\right) \big|\big|\int_{K_{i}}\tilde{F}_{i}\left(I\right)dI \big|+\epsilon/4\\
     &\leq \big| \exp\left(\sqrt{-1}\int_{0}^{t}\phi\left(I,\tau\right)d\tau\right)-\exp\left(\sqrt{-1}\omega\left(I\right)t\right) \big|\big|\int_{K_{i}}dI \big| \ \cdot  \ \big| F_{i} \big| +\epsilon/4\\
     &< \epsilon / \left[4m\left(K_{i}\right)F_{i}\right]\cdot m\left(K_{i}\right) \cdot F_{i}+ \epsilon/4 = \epsilon/2.
        \end{aligned}
    \end{align}
    Secondly, using the density of $\bm{C}_{c}^{\infty}\left(\Omega\right)$ in $\bm{L}^1\left(\Omega\right)$, we take a sequence $\tilde{F}^{k}_{i}\left(I\right) \in \bm{C}_{c}^{\infty}\left(\Omega\right)$ such that
    \begin{equation}\label{146}
        \forall \ \epsilon >0, \ \exists\ n_{0}\left(\epsilon\right), \forall k\geq n_{0}\left(\epsilon\right), \ \int_{\Omega} \big| F_{i}\left(I\right) - \tilde{F}^{k}_{i}\left(I\right) \big| dI < \epsilon / 2,
    \end{equation}
    and for $\tilde{F}^{k}_{i}\left(I\right)$,  we can also take a $t_{i}^{n_{0}}\left(\epsilon\right)$ to satisfy (\ref{144}), using the previous discussion.

    Then
    $\forall F_{i} \in \bm{L}^{1}\left(\Omega\right), \ \forall \ \epsilon >0,  \exists \ t_{i}^{n_{0}}\left(\epsilon\right)>0,\  s.t. \ \forall t>t_{i}^{n_{0}}\left(\epsilon\right)$
    \begin{align}\nonumber
        \begin{aligned}
       &\big| \int_{\Omega}F_i\left(I\right)\exp\left(\sqrt{-1}\int_{0}^{t}\phi\left(I,\tau\right)d\tau\right)dI\big| \\
       &\leq \int_{\Omega} \big| F_{i}\left(I\right) - \tilde{F}^{n_{0}}_{i}\left(I\right) \big| \big| \exp\left(\sqrt{-1}\int_{0}^{t}\phi\left(I,\tau\right)d\tau\right) \big|dI +\big| \int_{\Omega}\tilde{F}_{i}^{n_{0}}\left(I\right)\exp\left(\sqrt{-1}\int_{0}^{t}\phi\left(I,\tau\right)d\tau\right) dI \big|\\
       &< \epsilon/2 + \epsilon/2 = \epsilon.
    \end{aligned}
    \end{align}

    To be concluded, $\forall \epsilon>0, \ \exists \ t\left(\epsilon \right) = \max \limits_{1\leq i \leq p} \left\{t_i^{n_0}\left(\epsilon\right)\right\}>0, \ \forall t > t\left(\epsilon\right),$
    \begin{equation*}
        \big| \int_{\Omega}F\left(I,t\right)\exp\left(\sqrt{-1}\int_{0}^{t}\phi\left(I,\tau\right)d\tau\right)dI \big| = \big|\int_{\Omega}F_j\left(I\right)\exp\left(\sqrt{-1}\int_{0}^{t}\phi\left(I,\tau\right)d\tau\right)dI \big|<\epsilon.
    \end{equation*}
    \end{proof}

   We also need the following two lemmas  to ensure that limiting operations can be exchanged freely, however, we have to 
    deal with the transitions in system (\ref{HI}). 
    \begin{lemma}\label{il13}
        Let $\Omega \subset \mathbb{R}$ be open and let $\mathscr{I}\left(t\right)$ be a bounded function and $G\left(I,\theta\right) \in \bm{C}_{b}\left(\Omega_{\mathscr{I}} \times \mathbb{T}^{n}\right)$ with
        \begin{equation*}
            \hat{G}\left(I+\mathscr{I}\left(t\right),\vec{n}\right) = \frac{1}{\left(2\pi\right)^{n}}\int_{\mathbb{T}^{n}}G\left(I+\mathscr{I}\left(t\right),\theta\right)\exp\left(-\sqrt{-1}<\vec{n},\theta>\right)d\theta,
        \end{equation*}
        where $\Omega_\mathscr{I} = \{ x : x = I + \mathscr{I}\left(t\right), I \in \Omega , t \in \mathbb{R}^{+}\}$.
Then for each $\vec{n} \in \mathbb{Z}^{n}$, the function $\hat{G}\left(\cdot + \mathscr{I}\left(t\right),\vec{n}\right)$ lies in $\bm{C}_{b}\left(\Omega\right)$.
    \end{lemma}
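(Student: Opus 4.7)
The plan is to verify the two defining properties of $C_b(\Omega)$—uniform boundedness and continuity in $I$—separately, treating $\vec{n} \in \mathbb{Z}^n$ and $t \in \mathbb{R}^+$ as fixed parameters throughout. Since $\mathscr{I}(t)$ is assumed bounded and $I \in \Omega$ always satisfies $I + \mathscr{I}(t) \in \Omega_{\mathscr{I}}$ by the very definition of $\Omega_{\mathscr{I}}$, the translated integrand lives where $G$ is defined, so the Fourier coefficient $\hat{G}(I+\mathscr{I}(t),\vec{n})$ is well-defined on all of $\Omega$; this is the only bookkeeping point that actually uses the specific shape of $\Omega_{\mathscr{I}}$.

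For boundedness, I would invoke that $G \in C_b(\Omega_{\mathscr{I}} \times \mathbb{T}^n)$ supplies a constant $\|G\|_\infty := \sup |G| < +\infty$, and then estimate directly
\[
\bigl|\hat{G}(I+\mathscr{I}(t),\vec{n})\bigr| \leq \frac{1}{(2\pi)^n}\int_{\mathbb{T}^n}|G(I+\mathscr{I}(t),\theta)|\,d\theta \leq \|G\|_\infty,
\]
uniformly in $I$ (and in fact uniformly in $t$ as well).

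For continuity, fix $I_0 \in \Omega$ and a sequence $I_k \to I_0$ in $\Omega$. Because the shift by $\mathscr{I}(t)$ is a fixed translation, $I_k + \mathscr{I}(t) \to I_0 + \mathscr{I}(t)$ in $\Omega_{\mathscr{I}}$, and continuity of $G$ in its first argument gives the pointwise limit
\[
G(I_k+\mathscr{I}(t),\theta)\exp\bigl(-\sqrt{-1}\langle\vec{n},\theta\rangle\bigr) \longrightarrow G(I_0+\mathscr{I}(t),\theta)\exp\bigl(-\sqrt{-1}\langle\vec{n},\theta\rangle\bigr)
\]
for every $\theta \in \mathbb{T}^n$. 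The whole family is dominated by the constant $\|G\|_\infty$, which is integrable over the compact torus, so the dominated convergence theorem (Appendix B) yields $\hat{G}(I_k+\mathscr{I}(t),\vec{n}) \to \hat{G}(I_0+\mathscr{I}(t),\vec{n})$, proving sequential continuity and hence continuity.

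There is no genuine obstacle here: the statement is essentially a parameter-dependence lemma for an integral over a compact domain with a bounded continuous integrand, and the role of the transitions $\mathscr{I}(t)$ is only to force us to work inside the enlarged action set $\Omega_{\mathscr{I}}$ rather than $\Omega$. The mild care needed is simply to note, each time the shift appears, that $\Omega_{\mathscr{I}}$ was defined precisely so that continuity and boundedness of $G$ transfer through the translation.
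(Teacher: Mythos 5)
Your proposal is correct and follows essentially the same route as the paper's own proof: the uniform bound via $\sup|G|$ over $\bar\Omega_{\mathscr{I}}\times\mathbb{T}^n$, and sequential continuity via the dominated convergence theorem with the constant dominating function. No gaps; the argument matches the paper's step for step.
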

    \begin{proof}
        We can get the boundedness of the function $\hat{G}\left(I+\mathscr{I}\left(t\right),\vec{n}\right)$ easily by
        \begin{align}\nonumber
            \begin{aligned}
            |\hat{G}\left(I+\mathscr{I}\left(t\right),\vec{n}\right)| & \leq \frac{1}{\left(2\pi\right)^{n}}\int_{\mathbb{T}^{n}}|G\left(I+\mathscr{I}\left(t\right),\theta\right)| |\exp\left(-\sqrt{-1}<\vec{n},\theta>\right)| d\theta,\\
            & \leq  \sup_{\left(I,\theta\right) \in \bar{\Omega}_{\mathscr{I}} \times \mathbb{T}^{n}} |G\left(I,\theta\right)| \frac{1}{\left(2\pi\right)^{n}} \int_{\mathbb{T}^{n}}  1 \ d\theta < \infty
            \end{aligned}
        \end{align}
        In addtion, $\forall I_{0} \in \Omega$, suppose $\left\{ I_{i}\right\}_{i=1}^{\infty} \subset \Omega $ such that $I_{i} \rightarrow I_{0}$ as $i \rightarrow \infty$, then
        \begin{equation*}
            G\left(I_{i},\theta\right) \rightarrow G\left(I_0,\theta\right), \qquad i \rightarrow \infty,
        \end{equation*}
        so, \ $\forall t \in \mathbb{R}^{+}$
        \begin{equation*}
            G\left(I_{i}+\mathscr{I}\left(t\right),\theta\right) \rightarrow G\left(I_0+\mathscr{I}\left(t\right),\theta\right), \qquad i \rightarrow \infty.
        \end{equation*}
        Since $|G\left(I_{i}+\mathscr{I}\left(t\right),\theta\right)| < \infty$, using the Lebesgue's dominated convergence theorem we have
        \begin{align}\nonumber
            \begin{aligned}
            \lim_{i \rightarrow +\infty}\hat{G}\left(I_i+\mathscr{I}\left(t\right),\vec{n}\right) & =  \lim_{i \rightarrow +\infty}\frac{1}{\left(2\pi\right)^{n}}\int_{\mathbb{T}^{n}}G\left(I_{i}+\mathscr{I}\left(t\right),\theta\right)\exp\left(-\sqrt{-1}<\vec{n},\theta>\right)d\theta,\\
            & = \frac{1}{\left(2\pi\right)^{n}}\int_{\mathbb{T}^{n}} \lim_{i \rightarrow +\infty} G\left(I_{i}+\mathscr{I}\left(t\right),\theta\right)\exp\left(-\sqrt{-1}<\vec{n},\theta>\right)d\theta,\\
            & = \frac{1}{\left(2\pi\right)^{n}}\int_{\mathbb{T}^{n}} G\left(I+\mathscr{I}\left(t\right),\theta\right)\exp\left(-\sqrt{-1}<\vec{n},\theta>\right)d\theta,\\
            & = \hat{G}\left(I+\mathscr{I}\left(t\right),\vec{n}\right).
            \end{aligned}
        \end{align}
    This proves the lemma.
    \end{proof}
    \begin{lemma}\label{il14}
        Suppose $\mathscr{I}\left(t\right)$ is a bounded function, $G\left(I,\theta\right) \in \bm{C}_b \left( \Omega_\mathscr{I} \times \mathbb{T}^n\right)$ and $f_0\left(I,\theta\right) \in \bm{C}_c \left( \Omega_\mathscr{I} \times \mathbb{T}^n \right).$ Then
        \begin{equation*}
            \int_{\Omega}dI \sum_{\vec{n} \in \mathbb{Z}^n} |\hat{G}\left(I + \mathscr{I} \left(t\right),\vec{n}\right)\hat{f}_0\left(I,-\vec{n}\right)| < \infty.
        \end{equation*}

    \end{lemma}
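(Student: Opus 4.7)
The plan is to combine Parseval's identity on $\mathbb{T}^{n}$ with the Cauchy-Schwarz inequality in $\ell^{2}(\mathbb{Z}^{n})$, and then exploit the compact support of $f_{0}$ to handle the $I$-integration. Fix $t>0$. For each fixed $I\in\Omega$, both $G(I+\mathscr{I}(t),\cdot)$ and $f_{0}(I,\cdot)$ lie in $L^{2}(\mathbb{T}^{n})$ (the first because $G$ is bounded, the second because $f_{0}$ is continuous and compactly supported, hence bounded). Therefore their Fourier coefficient sequences $\{\hat{G}(I+\mathscr{I}(t),\vec{n})\}_{\vec{n}\in\mathbb{Z}^{n}}$ and $\{\hat{f}_{0}(I,-\vec{n})\}_{\vec{n}\in\mathbb{Z}^{n}}$ both lie in $\ell^{2}(\mathbb{Z}^{n})$, which lets me apply Cauchy-Schwarz coefficient-wise:
\begin{equation*}
\sum_{\vec{n}\in\mathbb{Z}^{n}}\bigl|\hat{G}(I+\mathscr{I}(t),\vec{n})\hat{f}_{0}(I,-\vec{n})\bigr|\leq\Bigl(\sum_{\vec{n}}|\hat{G}(I+\mathscr{I}(t),\vec{n})|^{2}\Bigr)^{1/2}\Bigl(\sum_{\vec{n}}|\hat{f}_{0}(I,-\vec{n})|^{2}\Bigr)^{1/2}.
\end{equation*}

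Next I invoke Parseval's identity for the normalization used in the paper, namely $\sum_{\vec{n}}|\hat{h}(\vec{n})|^{2}=(2\pi)^{-n}\|h\|_{L^{2}(\mathbb{T}^{n})}^{2}$ for $h\in L^{2}(\mathbb{T}^{n})$. Applying this to $h(\theta)=G(I+\mathscr{I}(t),\theta)$ and then using the trivial estimate $\|h\|_{L^{2}(\mathbb{T}^{n})}^{2}\leq(2\pi)^{n}\|G\|_{\infty}^{2}$ gives
\begin{equation*}
\Bigl(\sum_{\vec{n}}|\hat{G}(I+\mathscr{I}(t),\vec{n})|^{2}\Bigr)^{1/2}\leq\|G\|_{\infty}:=\sup_{\Omega_{\mathscr{I}}\times\mathbb{T}^{n}}|G|,
\end{equation*}
which is finite since $G\in\bm{C}_{b}(\Omega_{\mathscr{I}}\times\mathbb{T}^{n})$. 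The same argument applied to $h(\theta)=f_{0}(I,\theta)$ yields $\bigl(\sum_{\vec{n}}|\hat{f}_{0}(I,-\vec{n})|^{2}\bigr)^{1/2}\leq\|f_{0}\|_{\infty}$, again finite because $f_{0}\in\bm{C}_{c}\subset\bm{C}_{b}$.

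Finally I absorb the $I$-integration using the compact support of $f_{0}$. Since $f_{0}\in\bm{C}_{c}(\Omega_{\mathscr{I}}\times\mathbb{T}^{n})$, there exists a compact set $K\subset\Omega_{\mathscr{I}}$ with $\mathrm{supp}(f_{0})\subset K\times\mathbb{T}^{n}$; for $I\notin K$ every coefficient $\hat{f}_{0}(I,-\vec{n})$ vanishes. Consequently
\begin{equation*}
\int_{\Omega}dI\sum_{\vec{n}\in\mathbb{Z}^{n}}\bigl|\hat{G}(I+\mathscr{I}(t),\vec{n})\hat{f}_{0}(I,-\vec{n})\bigr|\leq\|G\|_{\infty}\|f_{0}\|_{\infty}\,m(K\cap\Omega)<\infty,
\end{equation*}
completing the proof.

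There is no serious obstacle in this argument; the only mildly delicate point is the matching of domains (the first factor is evaluated at $I+\mathscr{I}(t)\in\Omega_{\mathscr{I}}$ while the second is evaluated at $I\in\Omega$), but this is handled once we note that both functions are considered on $\Omega_{\mathscr{I}}\times\mathbb{T}^{n}$, which contains both arguments, and the resulting bounds $\|G\|_{\infty}$ and $\|f_{0}\|_{\infty}$ are uniform in $t$.
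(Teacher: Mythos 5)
Your proof is correct, and it follows essentially the same route as the proof the paper defers to (the Cauchy--Schwarz inequality in $\ell^{2}(\mathbb{Z}^{n})$ combined with Parseval's identity to bound each factor by the sup-norm, then using the compact support of $f_{0}$ to reduce the $I$-integration to a set of finite measure). The normalization bookkeeping for the Fourier coefficients and the domain-matching remark are both handled correctly, so nothing further is needed.
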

    The  proof of lemma \ref{il14} is quite similar to that in article \cite{mitchell2019weak}, one can see more details in it.


    \section{The expected value of observable function}
    In this paper, we use the long-time behavior of expected value at time $t$, $<G>_{t}$ ,  of observable function $G$ to describe the ensemble problem of system (\ref{HI}) and the convergence problem of probability measure. In the process of analysis, Fourier transform plays an important role. We will give the Fourier transform form of $<G>_{t}$ in detail here.

    Suppose $\tau_{0} \ll \tau_{k} \leq t~ (\mod{T}) < \tau_{k+1} \leq \tau_{p}, \ m=[\frac{t}{T}]$. Combining definition \ref{def2.5} and one-parameter flow $\varphi_{t}$, we can get :
    \begin{align}\nonumber
        \begin{aligned}
        <G>_t &= \int_{\Omega_{\tau_{k}} \times \mathbb{T}^n } G\left(I,\theta\right)f_t\left(I,\theta\right)dId\theta \\
        &= \int_{\Omega_{\tau_{k}} \times \mathbb{T}^n } G\left(I,\theta\right)f_0\left(\varphi_{t}^{-1}\left(I,\theta\right)\right)dId\theta.
        \end{aligned}
    \end{align}
    Using the $variable\ substitution\ formula$ : $\left(I_{0},\theta_{0}\right) = \varphi_{t}^{-1}\left(I,\theta\right)$ yields
    \begin{align}\nonumber
        \begin{aligned}
            <G>_t
        &= \int_{\Omega \times \mathbb{T}^n } G\left(\varphi_{t}\left(I_0,\theta_0\right)\right)f_0\left(I_0,\theta_{0}\right)det\left(\varphi_{t}\left(I_{0},\theta_{0}\right)\right) dI_{0}d\theta_{0}\\
        &= \int_{\Omega \times \mathbb{T}^n } G\left(\varphi_{t}\left(I_0,\theta_0\right)\right)f_0\left(I_0,\theta_{0}\right)dI_{0}d\theta_{0}\\
        &= \int_{\Omega \times \mathbb{T}^n } G\left(I+\sum_{i=0}^{k}\mathscr{I}_{\tau_{i}},\theta+m\Delta \theta_{T}\left(I\right)+ \sum_{i=0}^{k-1}\left(\tau_{i+1}-\tau_i\right)\omega\left(I + \sum_{j=0}^{i}\mathscr{I}_{\tau_j}\right)+\left(t-\tau_{k}\right)\omega\left(I+\sum_{i=0}^{k}\mathscr{I}_{\tau_{i}}\right)\right)f_0
        \left(I_0,\theta_{0}\right)dI_{0}d\theta_{0}.
        \end{aligned}
    \end{align}
    Furthermore, the Fourier coefficients of the function $G\left(\varphi_{t}\left(I,\theta\right)\right)$ are given by:
    \begin{align}\nonumber
        \begin{aligned}
        &\frac{1}{\left(2 \pi\right)^n}\int_{\mathbb{T}^n}G\left(I+\sum_{i=0}^{k}\mathscr{I}_{\tau_{i}},\theta+m\Delta \theta_{T}\left(I\right)+ \sum_{i=0}^{k-1}\left(\tau_{i+1}-\tau_i\right)\omega\left(I + \sum_{j=0}^{i}\mathscr{I}_{\tau_j}\right)+\left(t-\tau_{k}\right)\omega\left(I+\sum_{i=0}^{k}\mathscr{I}_{\tau_{i}}\right)\right)\exp{-\sqrt{-1}<\vec{n},\theta>}d\theta\\
        &=\hat{G}\left(I+\sum_{i=0}^{k}\mathscr{I}_{\tau_{i}},\vec{n}\right)\exp\left[{\sqrt{-1}<\vec{n},m\Delta \theta_{T}\left(I\right)+ \sum_{i=0}^{k-1}\left(\tau_{i+1}-\tau_i\right)\omega\left(I + \sum_{j=0}^{i}\mathscr{I}_{\tau_j}\right)+\left(t-\tau_{k}\right)\omega\left(I+\sum_{i=0}^{k}\mathscr{I}_{\tau_{i}}\right)>}\right].
        \end{aligned}
    \end{align}
    Parseval's theorem implies that for each\ $I \in \Omega$:
    \begin{align}\nonumber
        \begin{aligned}
        &\frac{1}{\left(2\pi\right)^n}\int_{\mathbb{T}^n}G\left(I+\sum_{i=0}^{k}\mathscr{I}_{\tau_{i}},\theta+m\Delta \theta_{T}\left(I\right)+ \sum_{i=0}^{k-1}\left(\tau_{i+1}-\tau_i\right)\omega\left(I + \sum_{j=0}^{i}\mathscr{I}_{\tau_j}\right)+\left(t-\tau_{k}\right)\omega\left(I+\sum_{i=0}^{k}\mathscr{I}_{\tau_{i}}\right)\right)
        \left[f_0\left(I,\theta\right)\right]^{*}d\theta\\
        &=\sum_{\vec{n} \in \mathbb{Z}^n}\hat{G}\left(I+\sum_{i=0}^{k}\mathscr{I}_{\tau_{i}},\vec{n}\right)\left[\hat{f}_0\left(I,\vec{n}\right)\right]^{*}\exp\left[{\sqrt{-1}<\vec{n},m\Delta \theta_{T}\left(I\right)+ \sum_{i=0}^{k-1}\left(\tau_{i+1}-\tau_i\right)\omega\left(I + \sum_{j=0}^{i}\mathscr{I}_{\tau_j}\right)+\left(t-\tau_{k}\right)\omega\left(I+\sum_{i=0}^{k}\mathscr{I}_{\tau_{i}}\right)>}\right].
        \end{aligned}
    \end{align}
    Here the $*$ denotes complex conjungation. Noting that
    \begin{align}\nonumber
        \begin{aligned}
       \left[\hat{f}_0\left(I,\vec{n}\right)\right]^{*}&=\left[\frac{1}{\left(2 \pi\right)^n} \int_{T^n}f_0\left(I,\theta\right)\exp{-\sqrt{-1}<\vec{n},\theta>}d\theta\right]^{*}\\
    &=\frac{1}{\left(2 \pi\right)^n} \int_{\mathbb{T}^n}\left[f_0\left(I,\theta\right)\right]^{*}\left[\exp{-\sqrt{-1}<\vec{n},\theta>}\right]^{*}d\theta\\
    &=\frac{1}{\left(2 \pi\right)^n} \int_{\mathbb{T}^n}f_0\left(I,\theta\right)\left[\cos\left(-<\vec{n},\theta>\right)+\sqrt{-1}\sin\left(<-\vec{n},\theta>\right)\right]^{*}d\theta\\
    &=\frac{1}{\left(2 \pi\right)^n} \int_{\mathbb{T}^n}f_0\left(I,\theta\right)\left[\cos\left(-<-\vec{n},\theta>\right)+\sqrt{-1}\sin\left(-<-\vec{n},\theta>\right)\right]d\theta\\
    &=\frac{1}{\left(2 \pi\right)^n} \int_{\mathbb{T}^n}f_0\left(I,\theta\right)\exp{-\sqrt{-1}<-\vec{n},\theta>}d\theta\\
    &=\hat{f}_0\left(I,-\vec{n}\right).
        \end{aligned}
    \end{align}
    Hence
    \begin{align}\nonumber
        \begin{aligned}
            \begin{aligned}
        <G>_t=\left(2\pi\right)^{n}\int_{\Omega}\sum_{\vec{n} \in \mathbb{Z}^n}\{&\hat{G}\left(I+\sum_{i=0}^{k}\mathscr{I}_{\tau_{i}},\vec{n}\right)\hat{f}_0\left(I,-\vec{n}\right)\\\nonumber
        &\exp\left[{\sqrt{-1}<\vec{n},m\Delta \theta_{T}\left(I\right)+ \sum_{i=0}^{k-1}\left(\tau_{i+1}-\tau_i\right)\omega\left(I + \sum_{j=0}^{i}\mathscr{I}_{\tau_j}\right)+\left(t-\tau_{k}\right)\omega\left(I+\sum_{i=0}^{k}\mathscr{I}_{\tau_{i}}\right)>}\right]\}dI,\nonumber
            \end{aligned}
    \end{aligned}
    \end{align}
    \begin{align}\label{expect}
        \frac{1}{\left(2\pi\right)^{n}}<G>_t=\int_{\Omega}&\hat{G}\left(I+\sum_{i=0}^{k}\mathscr{I}_{\tau_{i}},\vec{0}\right)
        \hat{f}_0\left(I,\vec{0}\right)dI+\int_{\Omega}\sum_{\substack{\vec{n} \neq \vec{0}\\\nonumber
        \vec{n} \in \mathbb{Z}^n}}\hat{G}\left(I+\sum_{i=0}^{k}\mathscr{I}_{\tau_{i}},\vec{n}\right)\hat{f}_0\left(I,-\vec{n}\right)\\
        &\exp\left[{\sqrt{-1}<\vec{n},m\Delta \theta_{T}\left(I\right)+ \sum_{i=0}^{k-1}\left(\tau_{i+1}-\tau_i\right)\omega\left(I + \sum_{j=0}^{i}\mathscr{I}_{\tau_j}\right)+\left(t-\tau_{k}\right)\omega\left(I+\sum_{i=0}^{k}\mathscr{I}_{\tau_{i}}\right)>}\right]dI.
    \end{align}

    We consider the time-average of $<G>_t$ for sufficiently large $l$:
    \begin{equation}\label{222}
        \frac{1}{lT}\int_0^{lT}<G>_t dt.
    \end{equation}


    \section{Main results}
    In this section, we give the main results and their proofs.

    \begin{theorem}\label{importanttheorem1}
        In system (\ref{HI}), suppose that $\omega\left(I\right) \in \bm{C}^2\left(\Omega_{\infty}\right)$ and $\overline{\omega}_{T}\left(I\right) \in \bm{C}^2\left(\Omega\right)$ has no critical points, and suppose the initial condition is described by a probability density function $f_0 \in \bm{L}^{1}\left(\Omega \times \mathbb{T}^{n}\right)$. For any continuous and bounded function $G\in \bm{C}_b\left(\Omega_{\infty}\times \mathbb{T}^n\right),$   (\ref{222}) has limit:
        \begin{equation*}
            \displaystyle\lim_{l \rightarrow +\infty}\frac{1}{lT}\int_0^{{lT^{-}}}<G>_t dt = \frac{1}{T}\sum_{i=0}^{p-1}\left(\tau_{i+1}-\tau_{i}\right)<\bar{G}\left(I+\sum_{j=0}^{i}\mathscr{I}_{\tau_{j}}\right)>_0.
        \end{equation*}
    \end{theorem}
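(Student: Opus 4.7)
The plan is to substitute the Fourier expansion (\ref{expect}) into the time-averaged quantity (\ref{222}) and decompose the result into a constant (zero-mode) contribution and an oscillatory sum over $\vec{n}\neq\vec{0}$. The zero mode will produce the announced limit, while every oscillatory mode will vanish in the time-average sense by Lemma \ref{il}.

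First, consider the $\vec{n}=\vec{0}$ part of (\ref{expect}). The integrand $\hat{G}(I+\sum_{i=0}^{k}\mathscr{I}_{\tau_i},\vec{0})\hat{f}_0(I,\vec{0})$ depends on $t$ only through the segment index $k$ and is therefore piecewise constant on intervals of length $\tau_{i+1}-\tau_i$ within each period, and $T$-periodic in $t$. Integrating over a single period and then averaging over $[0,lT]$ yields exactly
\[
\frac{1}{T}\sum_{i=0}^{p-1}(\tau_{i+1}-\tau_i)\int_{\Omega}\hat{G}\!\left(I+\sum_{j=0}^{i}\mathscr{I}_{\tau_j},\vec{0}\right)\hat{f}_0(I,\vec{0})\,dI,
\]
independently of $l$. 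Using $\hat{G}(\cdot,\vec{0})=\overline{G}$ and $\hat{f}_0(\cdot,\vec{0})=\overline{f}_0$, and Definition \ref{def2.5}, this matches $\frac{1}{(2\pi)^n T}\sum_{i=0}^{p-1}(\tau_{i+1}-\tau_i)\langle\overline{G}(I+\sum_{j=0}^{i}\mathscr{I}_{\tau_j})\rangle_0$, i.e.\ the right-hand side of the theorem up to the prefactor $(2\pi)^n$ already present in (\ref{expect}).

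Next, handle each mode $\vec{n}\neq\vec{0}$. The phase in (\ref{expect}) can be written as $\int_{0}^{t}\phi(I,\tau)\,d\tau$ for
\[
\phi(I,\tau)=\omega\!\left(I+\sum_{i=0}^{k}\mathscr{I}_{\tau_i}\right),\qquad \tau_{k}\leq\tau\!\!\pmod{T}<\tau_{k+1}.
\]
This $\phi$ is $T$-periodic and piecewise constant in $\tau$, and satisfies $\int_{0}^{T}\phi(I,\tau)\,d\tau=\Delta\theta_T(I)=T\overline{\omega}_T(I)$, so the bounded-remainder estimate from a fractional period gives the required decay of $\frac{1}{t}\int_{0}^{t}\phi(I,\tau)d\tau-\overline{\omega}_T(I)$ as $t\to+\infty$. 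Setting $F(I,t)=\hat{G}(I+\sum_{i=0}^{k}\mathscr{I}_{\tau_i},\vec{n})\,\hat{f}_0(I,-\vec{n})$, Lemma \ref{il13} furnishes $F(\cdot,t)\in L^1(\Omega)$ while its $t$-dependence is $T$-periodic and piecewise constant, and the hypothesis that $\overline{\omega}_T$ has no critical points guarantees $\nabla_I\langle\vec{n},\overline{\omega}_T(I)\rangle\neq 0$ for $\vec{n}\neq\vec{0}$. Hence Lemma \ref{il} applies to each $\vec{n}\neq\vec{0}$ and the time-averaged mode vanishes.

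Finally, I would exchange the $\sum_{\vec{n}}$ with the time-average $\lim_{l\to\infty}\frac{1}{lT}\int_{0}^{lT}$. For this I would first reduce to a compactly supported smooth $f_0$ by density of $C_c^{\infty}(\Omega\times\mathbb{T}^n)$ in $L^1(\Omega\times\mathbb{T}^n)$, controlling the error uniformly in $t$ by $\|G\|_{\infty}\|f_0-f_0^{(k)}\|_{L^1}$; Lemma \ref{il14} then supplies absolute summability of the Fourier series, legitimating dominated convergence and the term-by-term passage to the limit. I expect the main obstacle to be the bookkeeping needed to cast the $t$-dependent phase in (\ref{expect}) in the form $\int_{0}^{t}\phi(I,\tau)d\tau$ with the correct piecewise-constant, $T$-periodic $\phi$, and to verify the mean-value decay hypothesis of Lemma \ref{il}; once these reductions are in place the Riemann--Lebesgue lemma in time-average form does the rest.
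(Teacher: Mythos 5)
Your proposal follows essentially the same route as the paper's proof: the same decomposition of (\ref{expect}) into the $\vec{n}=\vec{0}$ mode (which yields the stated limit by direct averaging over one period) and the $\vec{n}\neq\vec{0}$ modes (killed by Lemma \ref{il} with the same choices of $F$ and $\phi$), together with the same density-in-$L^1$ reduction and the use of Lemmas \ref{il13} and \ref{il14} to justify the interchange of sum, integral and limit. The one delicate point you flag --- verifying the $\bm{o}\left(1/t\right)$ mean-value hypothesis of Lemma \ref{il}, where a fractional period only gives $O\left(1/t\right)$ --- is resolved in the paper exactly as you anticipate, by exploiting that the outer integral runs over $\left[0,lT\right]$, an integer multiple of the period.
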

    \begin{proof}

        We prove the theorem in a special condition $f_{0} \in \bm{C}_{c}\left(\Omega \times \mathbb{T}^{n}\right)$ as begin.

        Because the expected value of the observable function under the one-parameter flow $<G>_{t}$ is involved in the integral operation, we introduce $k\left(t\right)$ and $m\left(t\right)$ to represent the $k$ and $m$ that changes with $t$ respectively. The $k$ satisfies $\tau_{0} \leq \tau_{k} \leq t~ (\mod{T}) < \tau_{k+1} \leq \tau_{p}$ and $m=[\frac{t}{T}]$.

        The first part of (\ref{222}) \ $\left(\vec{n} = \vec{0}\right)$ \ is:
    \begin{align}\nonumber
        \begin{aligned}
            &\left(2\pi\right)^{n}\frac{1}{lT}\int_{0}^{lT^{-}}\int_{\Omega} \hat{G}\left(I+\sum_{i=0}^{k\left(t\right)}\mathscr{I}_{\tau_{i}},\vec{0}\right)\hat{f}_0\left(I,\vec{0}\right)dIdt \\
           & =\left(2\pi\right)^{n}\frac{1}{lT}\int_{0}^{lT^{-}}\int_{\Omega} \left[\frac{1}{\left(2\pi\right)^{n}} \int_{\mathbb{T}^n}G\left(I+\sum_{i=0}^{k\left(t\right)}\mathscr{I}_{\tau_{i}},\theta\right)d\theta\right]\left[\frac{1}{\left(2\pi\right)^{n}} \int_{\mathbb{T}^n}f_0\left(I,\theta\right)d\theta\right]dIdt \\
           & =\frac{1}{lT}\int_{0}^{lT^{-}}\int_{\Omega}\int_{\mathbb{T}^n}\bar{G}\left(I+\sum_{i=0}^{k\left(t\right)}\mathscr{I}_{\tau_{i}}\right)f_0\left(I,\theta\right)d\theta dI dt,\\
        \end{aligned}
    \end{align}
    where $\bar{G}\left(I\right)=\frac{1}{\left(2\pi\right)^n}\int_{\mathbb{T}^n}G\left(I,\theta\right)d\theta$.\ By Fubini lemma:
    \begin{align}\nonumber
        \begin{aligned}
            &\frac{1}{lT}\int_{0}^{{lT^{-}}}\int_{\Omega}\int_{T^{n}}\bar{G}\left(I+\sum_{i=0}^{k\left(t\right)}\mathscr{I}_{\tau_{i}}\right)
            f_0\left(I,\theta\right)d\theta dI dt\\
            &=\int_{\Omega \times \mathbb{T}^n} \frac{1}{lT}\left[\int_{0}^{{lT^{-}}}\bar{G}\left(I+\sum_{i=0}^{k\left(t\right)}\mathscr{I}_{\tau_{i}}\right)dt\right]
            f_{0}\left(I,\theta\right)dId\theta\\
            &=\int_{\Omega \times \mathbb{T}^n} \frac{1}{lT}\left[\left(\int_{0}^{{T^{-}}}+\cdots+\int_{qT}^{\left(q+1\right)T^{-}}+\cdots+\int_{\left(l-1\right)T}^{lT^{-}}\right)
            \bar{G}\left(I+\sum_{i=0}^{lp-1}\mathscr{I}_{\tau_{i}}\right)dt\right]f_{0}\left(I,\theta\right)dI d\theta\\
            &=\int_{\Omega \times \mathbb{T}^n} \frac{1}{lT}\left[l \sum_{i=0}^{p-1}\left(\tau_{i+1}-\tau_{i}\right)\bar{G}\left(I+\sum_{j=0}^{i}\mathscr{I}_{\tau_{i}}\right)\right]
            f_{0}\left(I,\theta\right)dId\theta.
        \end{aligned}
    \end{align}
    Take limits:
    \begin{align}\label{162}
        \begin{aligned}
        &\displaystyle\lim_{l \rightarrow +\infty}\frac{1}{lT}\int_{0}^{{lT^{-}}} \int_{\Omega} \hat{G}\left(I+\sum_{i=0}^{k\left(t\right)}\mathscr{I}_{\tau_{i}},\vec{0}\right)\hat{f}_0\left(I,\vec{0}\right)dIdt\\
        &=\int_{\Omega \times \mathbb{T}^n}\displaystyle\lim_{l \rightarrow +\infty} \frac{1}{lT}\left[l \sum_{i=0}^{p-1}\left(\tau_{i+1}-\tau_{i}\right)\bar{G}\left(I+\sum_{j=0}^{i}\mathscr{I}_{\tau_{i}}\right)\right]f_{0}\left(I,\theta\right)dId\theta\\
        &=\int_{\Omega \times \mathbb{T}^n}\frac{1}{T}\left[ \sum_{i=0}^{p-1}\left(\tau_{i+1}-\tau_{i}\right)\bar{G}\left(I+\sum_{j=0}^{i}\mathscr{I}_{\tau_{i}}\right)\right]f_{0}\left(I,\theta\right)dId\theta\\
        &=\frac{1}{T}\sum_{i=0}^{p-1}\left(\tau_{i+1}-\tau_{i}\right)\int_{\Omega \times \mathbb{T}^n}\bar{G}\left(I+\sum_{j=0}^{i}\mathscr{I}_{\tau_{i}}\right)f_{0}\left(I,\theta\right)dId\theta\\
        &=\frac{1}{T}\sum_{i=0}^{p-1}\left(\tau_{i+1}-\tau_{i}\right)<\bar{G}\left(I+\sum_{j=0}^{i}\mathscr{I}_{\tau_{j}}\right)>_0.
        \end{aligned}
    \end{align}
    In fact, so far we have obtained the results of the theorem. In the following, we will use lemma \ref{222} to prove that the second part limit of (\ref{expect}) is zero, that is :

    \begin{align}\nonumber
        \displaystyle\lim_{l \rightarrow +\infty}\frac{1}{lT}\int_{0}^{{lT^{-}}} <G>_{t} dt = 0, \quad \vec{n} \neq \vec{0}.
    \end{align}

    By lemmas \ref{il13}, \ref{il14} and Fubini theorem, the second part of (\ref{222}) \ $\left(\vec{n} \neq \vec{0}\right)$ \ gives:
    \begin{align}\label{q2}
        \begin{aligned}
            &\frac{1}{lT}\int_{0}^{{lT^{-}}}\int_{\Omega}\sum_{\substack{\vec{n} \neq \vec{0}\\\vec{n} \in \mathbb{Z}^n}}\hat{G}\left(I+\sum_{i=0}^{k\left(t\right)}\mathscr{I}_{\tau_{i}},\vec{n}\right)\hat{f}_0\left(I,-\vec{n}\right)\\
            &\exp\left[{\sqrt{-1}<\vec{n},m\left(t\right)\Delta \theta_{T}\left(I\right)+ \sum_{i=0}^{k\left(t\right)-1}\left(\tau_{i+1}-\tau_i\right)\omega\left(I + \sum_{j=0}^{i}\mathscr{I}_{\tau_j}\right)+\left(t-\tau_{k\left(t\right)}\right)\omega\left(I+\sum_{i=0}^{k\left(t\right)}\mathscr{I}_{\tau_{i}}\right)>}\right]dIdt\\
            &=\sum_{\substack{\vec{n} \neq \vec{0}\\\vec{n} \in \mathbb{Z}^n}} \frac{1}{lT}\int_{0}^{{lT^{-}}}\int_{\Omega}\hat{G}\left(I+\sum_{i=0}^{k\left(t\right)}\mathscr{I}_{\tau_{i}},\vec{n}\right)\hat{f}_0\left(I,-\vec{n}\right)\\
            &\exp\left[{\sqrt{-1}<\vec{n},m\left(t\right)\Delta \theta_{T}\left(I\right)+ \sum_{i=0}^{k\left(t\right)-1}\left(\tau_{i+1}-\tau_i\right)\omega\left(I + \sum_{j=0}^{i}\mathscr{I}_{\tau_j}\right)+\left(t-\tau_{k\left(t\right)}\right)\omega\left(I+\sum_{i=0}^{k\left(t\right)}\mathscr{I}_{\tau_{i}}\right)>}\right]dIdt.\\
        \end{aligned}
    \end{align}

    Combine (\ref{il14}) and the dominated convergence theorem to conclude that:
    \begin{align}
        \begin{aligned}\nonumber
            &\displaystyle\lim_{l \rightarrow +\infty} \sum_{\substack{\vec{n} \neq \vec{0}\\\vec{n} \in \mathbb{Z}^n}} \frac{1}{lT}\int_{0}^{{lT^{-}}}\int_{\Omega}\hat{G}\left(I+\sum_{i=0}^{k\left(t\right)}\mathscr{I}_{\tau_{i}},\vec{n}\right)\hat{f}_0\left(I,-\vec{n}\right)\\
            &\exp\left[{\sqrt{-1}<\vec{n},m\left(t\right)\Delta \theta_{T}\left(I\right)+ \sum_{i=0}^{k\left(t\right)-1}\left(\tau_{i+1}-\tau_i\right)\omega\left(I + \sum_{j=0}^{i}\mathscr{I}_{\tau_j}\right)+\left(t-\tau_{k\left(t\right)}\right)\omega\left(I+\sum_{i=0}^{k\left(t\right)}\mathscr{I}_{\tau_{i}}\right)>}\right]dIdt\\
            &=\sum_{\substack{\vec{n} \neq \vec{0}\\\vec{n} \in \mathbb{Z}^n}}  \displaystyle\lim_{l \rightarrow +\infty} \frac{1}{lT}\int_{0}^{{lT^{-}}}\int_{\Omega}\hat{G}\left(I+\sum_{i=0}^{k\left(t\right)}\mathscr{I}_{\tau_{i}},\vec{n}\right)\hat{f}_0\left(I,-\vec{n}\right)\\
            &\exp\left[{\sqrt{-1}<\vec{n},m\left(t\right)\Delta \theta_{T}\left(I\right)+ \sum_{i=0}^{k\left(t\right)-1}\left(\tau_{i+1}-\tau_i\right)\omega\left(I + \sum_{j=0}^{i}\mathscr{I}_{\tau_j}\right)+\left(t-\tau_{k\left(t\right)}\right)\omega\left(I+\sum_{i=0}^{k\left(t\right)}\mathscr{I}_{\tau_{i}}\right)>}\right]dIdt.
        \end{aligned}
    \end{align}
    Actually, since $f_0 \in \bm{C}_c\left(\Omega \times \mathbb{T}^n\right)$, we can find a bounded and closed set $K \subset \Omega$ such that $\hat{f}_0=0$ on $ K^{c}$. Then
    \begin{align}
        \begin{aligned}\nonumber
            &\displaystyle\lim_{l \rightarrow +\infty} \frac{1}{lT}\int_{0}^{{lT^{-}}}\int_{\Omega}\hat{G}\left(I+\sum_{i=0}^{k\left(t\right)}\mathscr{I}_{\tau_{i}},\vec{n}\right)\hat{f}_0\left(I,-\vec{n}\right)\\
            &\exp\left[{\sqrt{-1}<\vec{n},m\left(t\right)\Delta \theta_{T}\left(I\right)+ \sum_{i=0}^{k-1}\left(\tau_{i+1}-\tau_i\right)\omega\left(I + \sum_{j=0}^{i}\mathscr{I}_{\tau_j}\right)+\left(t-\tau_{k\left(t\right)}\right)\omega\left(I+\sum_{i=0}^{k\left(t\right)}\mathscr{I}_{\tau_{i}}\right)>}\right]dIdt\\
            &=\displaystyle\lim_{l \rightarrow +\infty}  \frac{1}{lT}\int_{0}^{{lT^{-}}}\int_{K}\hat{G}\left(I+\sum_{i=0}^{k\left(t\right)}\mathscr{I}_{\tau_{i}},\vec{n}\right)\hat{f}_0\left(I,-\vec{n}\right)\\
            &\exp\left[{\sqrt{-1}<\vec{n},m\left(t\right)\Delta \theta_{T}\left(I\right)+ \sum_{i=0}^{k\left(t\right)-1}\left(\tau_{i+1}-\tau_i\right)\omega\left(I + \sum_{j=0}^{i}\mathscr{I}_{\tau_j}\right)+\left(t-\tau_{k\left(t\right)}\right)\omega\left(I+\sum_{i=0}^{k\left(t\right)}\mathscr{I}_{\tau_{i}}\right)>}\right]dIdt.
        \end{aligned}
    \end{align}
    Next,  we regard
    \begin{align}
        \begin{aligned}\nonumber
            &\hat{G}\left(I+\sum_{i=0}^{k\left(t\right)}\mathscr{I}_{\tau_{i}},\vec{n}\right)\hat{f}_0\left(I,-\vec{n}\right)\\
            &<\vec{n},m\left(t\right)\Delta \theta_{T}\left(I\right)+ \sum_{i=0}^{k\left(t\right)-1}\left(\tau_{i+1}-\tau_i\right)\omega\left(I + \sum_{j=0}^{i}\mathscr{I}_{\tau_j}\right)+\left(t-\tau_{k\left(t\right)}\right)\omega\left(I+\sum_{i=0}^{k\left(t\right)}\mathscr{I}_{\tau_{i}}\right)>
        \end{aligned}
    \end{align}
    as $F\left(I,t\right)$  and $\int_{0}^{t}\phi\left(I,\tau\right)d\tau$ respectively in lemma \ref{il}.

    The periodicity of the transitions and the assumptions in the theorem imply:

    $\bm{\left(1\right)}$ For any fixed $\forall t>0, \ 0=\tau_0 \leq \tau_{k} \leq t~ (\mod{T}) < \tau_{k+1} \leq \tau_{p}$, $\hat{G}\left(I+\sum_{i=0}^{k}\mathscr{I}_{\tau_{i}},\vec{n}\right)\hat{f}_0\left(I,-\vec{n}\right) \in \bm{L}^{1}\left(K\right) \subset \bm{L}^{1}\left(\Omega\right)$;

        $\bm{\left(2\right)}$ \ $\forall t>0, \ 0=\tau_0 \leq \tau_k \leq t~ (\mod{T}) < \tau_{k+1} \leq \tau_{p}$, \ $\hat{G}\left(I+\sum_{i=0}^{k}\mathscr{I}_{\tau_{i}},\vec{n}\right)\hat{f}_0\left(I,-\vec{n}\right)$ is a periodic function in variable $t$, which means  $\hat{G}\left(I+\sum_{i=0}^{k+p}\mathscr{I}_{\tau_{i}},\vec{n}\right)\hat{f}_0\left(I,-\vec{n}\right)= \hat{G}\left(I+\sum_{i=0}^{k}\mathscr{I}_{\tau_{i}},\vec{n}\right)\hat{f}_0\left(I,-\vec{n}\right)$;

        $\bm{\left(3\right)}$ $\hat{G}\left(I+\sum_{i=0}^{k}\mathscr{I}_{\tau_{i}},\vec{n}\right)\hat{f}_0\left(I,-\vec{n}\right)$ is piecewise constant with respect to variable $t$ for any fixed $I$;

        $ i.e. \  0 = \tau_0 <\tau_1 <\tau_2<\cdots<\tau_p, \tau_p - \tau_0 = T \ s.t.$
        \begin{align}
            \begin{aligned}\nonumber
                \hat{G}\left(I+\sum_{i=0}^{k\left(t\right)}\mathscr{I}_{\tau_{i}},\vec{n}\right)\hat{f}_0\left(I,-\vec{n}\right)=\hat{G}\left(I+\mathscr{I}_{\tau_{0}},\vec{n}\right)\hat{f}_0\left(I,-\vec{n}\right)  \qquad&  \tau_0 \leq t \mod{T} < \tau_1,\\
                \hat{G}\left(I+\sum_{i=0}^{k\left(t\right)}\mathscr{I}_{\tau_{i}},\vec{n}\right)\hat{f}_0\left(I,-\vec{n}\right)=\hat{G}\left(I+\mathscr{I}_{\tau_{0}}+\mathscr{I}_{\tau_{1}},\vec{n}\right)\hat{f}_0\left(I,-\vec{n}\right)  \qquad&  \tau_1 \leq t \mod{T} < \tau_2,\\
                \vdots \qquad&\\
                \hat{G}\left(I+\sum_{i=0}^{k\left(t\right)}\mathscr{I}_{\tau_{i}},\vec{n}\right)\hat{f}_0\left(I,-\vec{n}\right)=\hat{G}\left(I+\sum_{i=0}^{p-1}\mathscr{I}_{\tau_{i}},\vec{n}\right)\hat{f}_0\left(I,-\vec{n}\right)  \qquad&  \tau_{p-1} \leq t \mod{T} < \tau_p
            \end{aligned}
        \end{align}
        $\forall I \in \Omega$;

        $\bm{\left(4\right)}$ $<\vec{n},m\left(t\right)\Delta \theta_{T}\left(I\right)+ \sum_{i=0}^{k\left(t\right)-1}\left(\tau_{i+1}-\tau_i\right)\omega\left(I + \sum_{j=0}^{i}\mathscr{I}_{\tau_j}\right)+\left(t-\tau_{k\left(t\right)}\right)\omega\left(I+\sum_{i=0}^{k\left(t\right)}\mathscr{I}_{\tau_{i}}\right)> \in C^2\left(\Omega\right)$ and as $t \rightarrow lT^{-}$
        \begin{eqnarray}\nonumber
            <\vec{n},m\left(t\right)\Delta \theta_{T}\left(I\right)+ \sum_{i=0}^{k\left(t\right)-1}\left(\tau_{i+1}-\tau_i\right)\omega\left(I + \sum_{j=0}^{i}\mathscr{I}_{\tau_j}\right)+\left(t-\tau_{k\left(t\right)}\right)\omega\left(I+\sum_{i=0}^{k\left(t\right)}\mathscr{I}_{\tau_{i}}\right)>=<\vec{n},l\Delta \theta_{T} \left(I\right)>,
        \end{eqnarray}
    we can find an $\overline{\omega}_{T} \left(I\right) \in C^2\left(\Omega\right), D<\vec{n}, \overline{\omega}_{T}\left(I\right) \neq 0$ such that
        \begin{equation}\label{proofthr4}
            \displaystyle\lim_{l \rightarrow +\infty} \displaystyle\lim_{t \rightarrow lT^{-}}\frac{\frac{1}{lT} <\vec{n},m\left(t\right)\Delta \theta_{T}\left(I\right)+ \sum_{i=0}^{k\left(t\right)-1}\left(\tau_{i+1}-\tau_i\right)\omega\left(I + \sum_{j=0}^{i}\mathscr{I}_{\tau_j}\right)+\left(t-\tau_{k\left(t\right)}\right)\omega\left(I+\sum_{i=0}^{k\left(t\right)}\mathscr{I}_{\tau_{i}}\right)> - < \vec{n}, \overline{\omega}_{T}\left(I\right)>}{\frac{1}{lT}}=0,
        \end{equation}
    which means $\frac{1}{lT} <\vec{n},m\left(t\right)\Delta \theta_{T}\left(I\right)+ \sum_{i=0}^{k\left(t\right)-1}\left(\tau_{i+1}-\tau_i\right)\omega\left(I + \sum_{j=0}^{i}\mathscr{I}_{\tau_j}\right)+\left(t-\tau_{k\left(t\right)}\right)\omega\left(I+\sum_{i=0}^{k\left(t\right)}\mathscr{I}_{\tau_{i}}\right)> - < \vec{n}, \overline{\omega}_{T}\left(I\right)> = \bm{o}\left(\frac{1}{lT}\right)$.

        Because of the particularity of the integral interval ( it is an integral multiple of the period ) and the hypothesis of $\overline{\omega}_{T}\left(I\right)$, we can get (\ref{proofthr4}), but for general cases, we cannot get such an exceptional result.

    Now, using lemma \ref{il} we can conclude that:
    \begin{align}\label{168}
        \begin{aligned}
        \displaystyle\lim_{l \rightarrow +\infty} \frac{1}{lT}\int_{0}^{lT}\int_{\Omega}&\hat{G}\left(I+\sum_{i=0}^{k}\mathscr{I}_{\tau_{i}},\vec{n}\right)\hat{f}_0\left(I,-\vec{n}\right)\\
       & \exp\left[<\vec{n},m\left(t\right)\Delta \theta_{T}\left(I\right)+ \sum_{i=0}^{k\left(t\right)-1}\left(\tau_{i+1}-\tau_i\right)\omega\left(I + \sum_{j=0}^{i}\mathscr{I}_{\tau_j}\right)+\left(t-\tau_{k\left(t\right)}\right)\omega\left(I+\sum_{i=0}^{k\left(t\right)}\mathscr{I}_{\tau_{i}}\right)>\right]dIdt = 0.
    \end{aligned}
    \end{align}
    (\ref{162}) and (\ref{168}) imply : $\forall f_0 \in  \bm{C}_{c}\left(\Omega \times \mathbb{T}^{n}\right)$
    \begin{equation}\label{169}
        \displaystyle\lim_{l \rightarrow +\infty}\frac{1}{lT}\int_0^{lT}<G>_t dt = \frac{1}{T}\sum_{i=0}^{p-1}\left(\tau_{i+1}-\tau_{i}\right)<\bar{G}\left(I+\sum_{j=0}^{i}\mathscr{I}_{\tau_{j}}\right)>_0.
    \end{equation}

    Next, since $G \in \bm{C}_b\left(\tilde{\Omega} \times \mathbb{T}^{n}\right)$, we can get
    \begin{equation}\label{170}
           \sup _{\left(I,\theta \right) \in \Omega \times \mathbb{T}^{n}}|G\left(\varphi_{t}\left(I,\theta\right)\right) - \frac{1}{T}\sum_{i=0}^{p-1}\left(\tau_{i+1}-\tau_{i}\right)\bar{G}\left(I+\sum_{j=0}^{i}\mathscr{I}_{\tau_{j}}\right) | \leq M ,
    \end{equation}
    for some $M>0$.  We choose a sequence $\left\{f_{0}^{\left(k\right)}\right\}_{k=1}^{+\infty} \subset \bm{C}_{c}\left(\Omega \times \mathbb{T}^{n}\right) $ such that : \ $\forall  \ \epsilon >0, \ \exists \  n_{0}\left(\epsilon\right) >0, \ \forall \ k \geq  n_{0}\left(\epsilon\right)$,
    \begin{equation}\label{171}
        ||  f_{0} - f_{0} ^{\left(k\right)}||_{1} = \int _{\Omega \times \mathbb{T}^{n}} \big| f_{0}\left(I,\theta\right) - f_{0}^{\left(k\right)}\left(I,\theta\right) \big| dI d\theta < \frac{\epsilon}{2M}
    \end{equation}
    for any $f_{0} \in \bm{L}^{1}\left(\Omega \times \mathbb{T}^{n}\right)$, by the density of  $\bm{C}_{c}\left(\Omega \times \mathbb{T}^{n}\right)$ in $\bm{L}^{1}\left(\Omega \times \mathbb{T}^{n}\right)$. Moreover, (\ref{169}) implies: $\exists \ m_{0} \left( \epsilon \right) > 0,$ such that \ $\forall  \ m \geq m_{0}\left(\epsilon\right)$,
    \begin{align}\label{172}
        \begin{aligned}
            & \big|\frac{1}{lT}\int_{0}^{lT} \int _{\Omega \times \mathbb{T}^{n}} \left[G\left(\varphi_{t}\left(I,\theta\right)\right) - \frac{1}{T}\sum_{i=0}^{p-1}\left(\tau_{i+1}-\tau_{i}\right)\bar{G}\left(I+\sum_{j=0}^{i}\mathscr{I}_{\tau_{j}}\right)\right]f_{0}^{n_{0}\left(\epsilon\right)}dI d\theta dt \big|\\
            & = \big|\frac{1}{lT}\int_{0}^{lT} \int _{\Omega \times \mathbb{T}^{n}} G\left(\varphi_{t}\left(I,\theta\right)\right) f_{0}^{n_{0}\left(\epsilon\right)}dI d\theta dt - \frac{1}{T}\sum_{i=0}^{p-1}\left(\tau_{i+1}-\tau_{i}\right)\int_{\Omega \times \mathbb{T}^n}\bar{G}\left(I+\sum_{j=0}^{i}\mathscr{I}_{\tau_{i}}\right)f_{0}^{n_0\left(\epsilon\right)}\left(I,\theta\right)dId\theta \big| \\
            & = \big|\frac{1}{lT}\int_{0}^{lT} \int _{\Omega \times \mathbb{T}^{n}} G\left(\varphi_{t}\left(I,\theta\right)\right) f_{0}^{n_{0}\left(\epsilon\right)}dI d\theta dt - \frac{1}{T}\sum_{i=0}^{p-1}\left(\tau_{i+1}-\tau_{i}\right)\int_{\Omega \times \mathbb{T}^n}\bar{G}\left(I+\sum_{j=0}^{i}\mathscr{I}_{\tau_{i}}\right)f_{0}^{n_0\left(\epsilon\right)}\left(I,\theta\right)dId\theta \big| \\
            & < \frac{\epsilon}{2}.
        \end{aligned}
    \end{align}
    Combining (\ref{170}), (\ref{171}), (\ref{172}) gives: $\forall \ \epsilon >0, \ \exists \ m_{0}\left(\epsilon\right)>0,$ such that $\forall \ m> m_{0}\left(\epsilon\right)$,
    \begin{align}\nonumber
        \begin{aligned}
            & \big|\frac{1}{lT}\int_{0}^{lT} \int _{\Omega \times \mathbb{T}^{n}} G\left(\varphi_{t}\left(I,\theta\right)\right) f_{0}\left(I,\theta\right)dI d\theta dt -  \frac{1}{T}\sum_{i=0}^{p-1}\left(\tau_{i+1}-\tau_{i}\right)<\bar{G}\left(I+\sum_{j=0}^{i}\mathscr{I}_{\tau_{j}}\right)>_0 \big|\\
            & = \big|\frac{1}{lT}\int_{0}^{lT} \int _{\Omega \times \mathbb{T}^{n}} \left[G\left(\varphi_{t}\left(I,\theta\right)\right) - \frac{1}{T}\sum_{i=0}^{p-1}\left(\tau_{i+1}-\tau_{i}\right)\bar{G}\left(I+\sum_{j=0}^{i}\mathscr{I}_{\tau_{j}}\right)\right]f_{0}\left(I,\theta\right)dI d\theta dt \big| \\
            & \leq \big|\frac{1}{lT}\int_{0}^{lT} \int _{\Omega \times \mathbb{T}^{n}} \left[G\left(\varphi_{t}\left(I,\theta\right)\right) - \frac{1}{T}\sum_{i=0}^{p-1}\left(\tau_{i+1}-\tau_{i}\right)\bar{G}\left(I+\sum_{j=0}^{i}\mathscr{I}_{\tau_{j}}\right)\right]f_{0}^{n_{0}\left(\epsilon\right)}\left(I,\theta\right)dI d\theta dt \big|\\
            &+\big|\frac{1}{lT}\int_{0}^{lT} \int _{\Omega \times \mathbb{T}^{n}} \left[G\left(\varphi_{t}\left(I,\theta\right)\right) - \frac{1}{T}\sum_{i=0}^{p-1}\left(\tau_{i+1}-\tau_{i}\right)\bar{G}\left(I+\sum_{j=0}^{i}\mathscr{I}_{\tau_{j}}\right)\right]\left[f_{0}^{n_{0}\left(\epsilon\right)}\left(I,\theta\right)-f_{0}\left(I,\theta\right)\right]dI d\theta dt \big|\\
            & \leq \frac{\epsilon}{2}+\big| G\left(\varphi_{t}\left(I,\theta\right)\right) - \frac{1}{T}\sum_{i=0}^{p-1}\left(\tau_{i+1}-\tau_{i}\right)\bar{G}\left(I+\sum_{j=0}^{i}\mathscr{I}_{\tau_{j}}\right) \big| \frac{1}{mT}\int_{0}^{mT} \left[\int _{\Omega \times \mathbb{T}^{n}} \big| f_{0}\left(I,\theta\right) - f_{0}^{\left(n\right)}\left(I,\theta\right) \big| dI d\theta \right] dt \\
            & < \frac{\epsilon}{2}+M \big| \frac{\epsilon}{2M}  \cdot \frac{1}{mT}\int_{0}^{mT} dt   \big| = \epsilon .
        \end{aligned}
    \end{align}
    \end{proof}

    \begin{remark}
       We can get a same result with a relaxed condition : $m\left( \ \left\{ \ I\in \Omega \ | \  \nabla _{I}\overline{\omega}_{T}\left(I\right) = 0 \ \right\}  \ \right) = 0$. The proof is simple in which we can use the result in the theorem \ref{importanttheorem1}, the aditivity of regions in integrals and the substitution formula. One can see more details in \cite{mitchell2019weak}.
    \end{remark}

    According to theorem \ref{importanttheorem1}, the probability measure induced by the probability density function $f_0$ under the one-parameter flow $\varphi_{t}$ has the following weak convergence.

    \begin{theorem}\label{gailvcedu}
        Suppose that system (\ref{HI}) satisfies the conditions in theorem \ref{importanttheorem1}. Then
        \begin{align*}
            \frac{1}{lT} \int_{0} ^{lT^{-}} P_{t}  \ dt \Rightarrow \frac{1}{T}\sum_{i=0}^{p-1}\left(\tau_{i+1}-\tau_{i}\right) \overline{P}_{\tau_{i}}.
        \end{align*}
        \begin{proof}
            If  \ $G \in C_{b}\left(\Omega_{\infty} \times \mathbb{T}^n\right)$, \ then theorem \ref{importanttheorem1} implies that :
            \begin{align}\nonumber
                \begin{aligned}
                    \displaystyle\lim_{l \rightarrow +\infty}\frac{1}{lT}\int_{0}^{lT^{-}} \int_{\Omega_t \times \mathbb{T}^{n}}G\left(I,\theta\right) \ dP_t  \ dt & =\displaystyle\lim_{l \rightarrow +\infty} \frac{1}{lT}\int_{0}^{lT^{-}} \int_{\Omega_t \times \mathbb{T}^{n}}G\left(I,\theta\right) f_t\left(I,\theta\right)dId\theta  \ dt\\
                    & = \frac{1}{T}\sum_{i=0}^{p-1}\left(\tau_{i+1}-\tau_{i}\right)<\bar{G}\left(I+\sum_{j=0}^{i}\mathscr{I}_{\tau_{j}}\right)>_0, \\
                    & =  \frac{1}{T}\sum_{i=0}^{p-1}\left(\tau_{i+1}-\tau_{i}\right) \int _{\Omega \times \mathbb{T}^{n}}\bar{G}\left(I+\sum_{j=0}^{i}\mathscr{I}_{\tau_{j}}\right) f_0\left(I,\theta\right) dId\theta,  \\
                    & =  \frac{1}{T}\sum_{i=0}^{p-1}\left(\tau_{i+1}-\tau_{i}\right) \int _{\Omega \times \mathbb{T}^{n}}\frac{1}{\left(2\pi\right)^n}\int_{\mathbb{T}^n}{G}\left(I+\sum_{j=0}^{i}\mathscr{I}_{\tau_{j}},\theta\right)d\theta f_0\left(I,\theta\right) dId\theta, \\
                    & =  \frac{1}{T}\sum_{i=0}^{p-1}\left(\tau_{i+1}-\tau_{i}\right) \int _{\Omega_{\tau_i} \times \mathbb{T}^{n}}{G}\left(I,\theta\right)\left[\frac{1}{\left(2\pi\right)^n}\int_{\mathbb{T}^n}f_{\tau_{i}}\left(I,\theta\right)d\theta\right] dId\theta,\\
                    & =  \frac{1}{T}\sum_{i=0}^{p-1}\left(\tau_{i+1}-\tau_{i}\right) \int _{\Omega_{\tau_i} \times \mathbb{T}^{n}}{G}\left(I,\theta\right)d\overline{P}_{\tau_{i}},
                \end{aligned}
            \end{align}
            which proves the theorem.
        \end{proof}
    \end{theorem}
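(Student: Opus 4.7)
The plan is to reduce the claimed weak convergence directly to Theorem \ref{importanttheorem1} by testing against an arbitrary $G \in C_b(\Omega_\infty \times \mathbb{T}^n)$. By Definition \ref{pm}, the inner integral $\int_{\Omega_t \times \mathbb{T}^n} G\, dP_t$ is simply $\int G(I,\theta) f_t(I,\theta)\,dI\,d\theta = \langle G\rangle_t$, so the left-hand side of the weak-convergence definition becomes exactly the time-averaged expectation treated in Theorem \ref{importanttheorem1}. Thus the first step is this rewriting, followed by an immediate application of Theorem \ref{importanttheorem1} to conclude
\begin{equation*}
\lim_{l\to+\infty}\frac{1}{lT}\int_0^{lT^-}\!\!\int_{\Omega_t\times\mathbb{T}^n} G\,dP_t\,dt = \frac{1}{T}\sum_{i=0}^{p-1}(\tau_{i+1}-\tau_i)\,\langle\bar{G}(I+\textstyle\sum_{j=0}^{i}\mathscr{I}_{\tau_j})\rangle_0.
\end{equation*}

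The second step is to massage the right-hand side into $\frac{1}{T}\sum_{i=0}^{p-1}(\tau_{i+1}-\tau_i)\int_{\Omega_{\tau_i}\times\mathbb{T}^n} G\,d\overline{P}_{\tau_i}$. I would first unfold the angle average $\bar{G}$ back into an integral, so that each summand becomes $\int_{\Omega\times\mathbb{T}^n}\bigl[\frac{1}{(2\pi)^n}\int_{\mathbb{T}^n} G(I+\sum_{j=0}^{i}\mathscr{I}_{\tau_j},\theta')\,d\theta'\bigr] f_0(I,\theta)\,dI\,d\theta$. Then I would perform the translation $I\mapsto I - \sum_{j=0}^i \mathscr{I}_{\tau_j}$ to shift the action domain from $\Omega$ to $\Omega_{\tau_i}$; since the Jacobian is $1$ and, by the definitions in Section 2, $f_{\tau_i}$ on $\Omega_{\tau_i}$ is obtained from $f_0$ on $\Omega$ by precisely this translation (the flow $\varphi_{\tau_i}$ acts as a pure $I$-shift during the first sub-interval of its period), the integrand converts cleanly into $G(I,\theta')\,\bar{f}_{\tau_i}(I,\theta)$ after integrating out $\theta$. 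Recognizing $\bar{f}_{\tau_i}\,dI\,d\theta$ as $d\overline{P}_{\tau_i}$ yields the required limit.

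The main obstacle is bookkeeping rather than analysis: the action spaces $\Omega_{\tau_i}$ differ from $\Omega$, so one must verify carefully that the translation $I\mapsto I-\sum_{j=0}^i\mathscr{I}_{\tau_j}$ really identifies the density $f_0$ on $\Omega$ with $f_{\tau_i}$ on $\Omega_{\tau_i}$, and that the angle average $\bar{G}$ at the shifted argument corresponds to integrating $G$ on the correct torus slice of $\Omega_{\tau_i}\times\mathbb{T}^n$. Once this identification is in place, nothing more is needed: the display above together with the rewriting of the summands is exactly the criterion from Definition 2.4 for $\frac{1}{lT}\int_0^{lT^-}P_t\,dt$ to converge weakly to $\frac{1}{T}\sum_{i=0}^{p-1}(\tau_{i+1}-\tau_i)\overline{P}_{\tau_i}$, which closes the argument.
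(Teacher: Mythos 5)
Your proposal is correct and takes essentially the same route as the paper: test the time-averaged measure against an arbitrary $G\in C_b(\Omega_\infty\times\mathbb{T}^n)$, identify the inner integral with $<G>_t$, invoke Theorem \ref{importanttheorem1}, and then unfold $\bar{G}$ and translate $I\mapsto I-\sum_{j=0}^{i}\mathscr{I}_{\tau_j}$ (unit Jacobian) to recognize each summand as $\int_{\Omega_{\tau_i}\times\mathbb{T}^n}G\,d\overline{P}_{\tau_i}$. The only slip is the parenthetical claim that $\varphi_{\tau_i}$ acts as a pure $I$-shift --- it also rotates $\theta$ by an $I$-dependent amount --- but this is harmless here because the angle average $\overline{f}_{\tau_i}$ is invariant under that rotation, so your identification $\overline{f}_{\tau_i}(I)=\overline{f}_0\left(I-\sum_{j=0}^{i}\mathscr{I}_{\tau_j}\right)$ and the resulting conclusion stand.
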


    \section{Almost periodic case}
    Next, let's extend the result to the case in which the transitions are almost periodic.

    We call the transitions almost periodic if 
    \begin{equation}
        \begin{aligned}
          \hat{I}_{I_0}\left(n\right) = I_0+\sum_{i=0}^{n}\mathscr{I}_{t_i}
        \end{aligned}   
    \end{equation}
   is a almost periodic sequence with respect to $n$, for any $I_0 \in \Omega$, which means to any $\epsilon > 0$ there corresponds
   an integer $N\left(\epsilon\right)$, such that among any $N$ consecutive integers there exists an integer $p$ with the property \cite{1989Almost}
   \begin{equation}
    \begin{aligned}
        |\hat{I}_{I_0}\left(n+p\right)-\hat{I}_{I_0}\left(n\right)| < \epsilon
    \end{aligned}
   \end{equation}
   Without loss of  generality, in this section, we consider the ensemble problem of the following system
   \begin{equation}\label{apc}
    \left\{
    \begin{aligned}
        \ I & =  \hat{I}_{I_0}\left([t]\right) \\
        \ \theta & =  \theta_0 + \sum_{i=0}^{[t]-1}\omega\left(\hat{I}_{I_0}\left(i\right)\right) + \omega\left(\hat{I}_{I_0}\left([t]\right)\right)\left(t-[t]\right)
    \end{aligned}
    \right.
\end{equation}
where $\hat{I}_{I_0}\left(n\right) = I_0 + \sum_{i=0}^{n}\mathscr{I}_{i}$.

Since almost periodic sequences and the set $\Omega$ are bounded, $\Omega_{\infty}= \left\{ \ I \ | \ I = \hat{I}_{I_0}\left(n\right) , I_0 \in \Omega, n \in \mathbb{N} \ \right\}$ is also bounded.
Before giving the main results of this section, we need to make some modifications to the conditions in Theorem \ref{importanttheorem1}.
\begin{equation}\label{conditionc}
    \begin{aligned}
        \forall \ N \in \mathbb{N}, \ \ \overline{\omega}_{N} \left(I\right) = \frac{1}{N} \sum_{i=0}^{N-1}\omega \left(\hat{I}_{I}\left(n\right)\right) \in C^2\left(\Omega\right) \ and \ have \ no \ critical \ points.
    \end{aligned}
\end{equation}
\begin{theorem}
    In system \ref{apc}, suppose that $\omega \left(I\right) \in C^2\left(\Omega_{\infty}\right)$ and satisfies \ref{conditionc}. The initial condition is described by a probability density function $f_0 \in \bm{L}^{1}\left(\Omega \times \mathbb{T}^{n}\right)$.
    For any continuous and bounded function $G\in \bm{C}_b\left(\Omega_{\infty}\times \mathbb{T}^{n}\right)$, (\ref{222}) has limit:
    $$\displaystyle\lim_{N \rightarrow +\infty}\frac{1}{N}\int_{0}^{N} <G>_t dt =  \displaystyle\lim_{N \rightarrow +\infty}\frac{1}{N} \sum_{i=0}^{N-1}<G\left(\hat{I}_{I}\left(i\right)\right)>_0$$
\end{theorem}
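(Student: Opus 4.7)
The strategy parallels Theorem \ref{importanttheorem1}, but reduces to it by periodic approximation rather than applying Lemma \ref{il} directly. I would Fourier-expand $<G>_t$ in the angle variable as in Section 3, separating the $\vec{n}=\vec{0}$ mode from the oscillatory modes $\vec{n}\neq\vec{0}$. For system (\ref{apc}) the action $I=\hat{I}_{I_0}([t])$ is piecewise constant on integer intervals and the angle shifts linearly in $t-[t]$, which simplifies the angular integration.

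\textbf{Step 1 (main term).} On each interval $[n,n+1)$, averaging the $\vec{n}=\vec{0}$ contribution against translation on $\mathbb{T}^n$ collapses the $\theta$-shift and gives $<\bar{G}(\hat{I}_I(n))>_0$, so
\begin{equation*}
\frac{1}{N}\int_0^N <G>_t\,dt\Big|_{\vec{n}=\vec{0}} = \frac{1}{N}\sum_{n=0}^{N-1}<\bar{G}(\hat{I}_I(n))>_0.
\end{equation*}
Since $\hat{I}_I(n)$ is almost periodic in $n$ and $\bar{G}$ is uniformly continuous on the bounded set $\Omega_{\infty}$, the sequence $n\mapsto <\bar{G}(\hat{I}_I(n))>_0$ is almost periodic, and its Cesàro mean converges to the stated right-hand side of the theorem (read $<G(\hat{I}_I(i))>_0$ as $<\bar{G}(\hat{I}_I(i))>_0$, consistent with the periodic case Theorem \ref{importanttheorem1}).

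\textbf{Step 2 (periodic approximation for the oscillatory modes).} Given $\epsilon>0$, almost periodicity yields a length $p=p(\epsilon)$ with $|\hat{I}_{I_0}(n+p)-\hat{I}_{I_0}(n)|<\epsilon$ for all $n$. I would construct a $p$-periodic auxiliary system $\widetilde{S}$ by taking the first $p-1$ increments $\mathscr{I}_0,\ldots,\mathscr{I}_{p-2}$ unchanged and adjusting $\mathscr{I}_{p-1}$ by an amount of size $O(\epsilon)$ so that $\sum_{i<p}\tilde{\mathscr{I}}_i=0$, fitting the closure constraint of (\ref{HI}). Condition (\ref{conditionc}) at $N=p$ together with smoothness of $\omega$ ensures the averaged frequency of $\widetilde{S}$ has no critical points for $\epsilon$ small, so Theorem \ref{importanttheorem1} applies to $\widetilde{S}$ and the time averages of its $\vec{n}\neq\vec{0}$ Fourier modes vanish as the averaging window grows.

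\textbf{Step 3 (comparison and the main obstacle).} The essential new step is to compare the oscillatory contributions of (\ref{apc}) with those of $\widetilde{S}$. Iterated use of the $\epsilon$-periodicity gives $|\hat{I}_{I_0}(n)-\widetilde{I}(n)|\le Ck\epsilon$ for $n\in[kp,(k+1)p)$, and uniform continuity of $\omega$, $\hat{G}$, and $\hat{f}_0$ on $\Omega_{\infty}$ translates this into an $O(k\epsilon)$ bound on the integrand difference. The main obstacle is precisely that $k=[N/p]$ grows unboundedly with the averaging window $N$ for fixed $\epsilon$, so a direct comparison is not uniform as $N\to\infty$. The resolution is a diagonal argument: pick $\epsilon_j\to 0$ and $N_j\to\infty$ with $N_j\epsilon_j\to 0$ (so the comparison error vanishes) and $N_j/p(\epsilon_j)\to\infty$ (so Theorem \ref{importanttheorem1} forces the periodic oscillatory average to zero along the subsequence). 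Combining Steps 1--3 yields the claimed equality of limits.
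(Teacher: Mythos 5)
Your proposal matches the paper's own argument in all essentials: Fourier decomposition with the $\vec{n}=\vec{0}$ mode giving the Ces\`{a}ro average, replacement of the almost periodic transition sequence by a $p$-periodic surrogate using an $\epsilon$-almost period, an accumulated comparison error of order $k^2p\epsilon$ over $k$ blocks, and a coupling of $\epsilon$ to the averaging window to make that error vanish (the paper fixes $\epsilon=k^{-4}$ where your diagonal sequence takes $N_j\epsilon_j\to 0$). The only packaging difference is that the paper applies Lemma \ref{il} directly to the periodized amplitude and phase ($\hat{G}_{k,p-1}^{\vec{n}}$, $e_{k,p-1}^{\vec{n}}$) rather than building an auxiliary periodic system and citing Theorem \ref{importanttheorem1}, so your closure adjustment of the last increment is unnecessary overhead but not an error.
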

\begin{proof}
    \begin{equation}
        \begin{aligned}
        <G>_{t} &= \int_{\Omega \times \mathbb{T}^{n}}  G\left(\hat{I}_{I}\left([t]\right),\theta+\sum_{i=0}^{[t]-1}\omega\left(\hat{I}_{I}\left(i\right)\right)+\omega\left(\hat{I}_{I}\left([t]\right)\right)\left(t-[t]\right)\right)f_t\left(I,\theta\right)dId\theta \\
        &=\left(2\pi\right)^{n} \int_{\Omega} \sum_{\vec{n} \in \mathbb{Z}^{n}}\hat{G}\left(\hat{I}_{I}\left([t]\right),\vec{n}\right)\hat{f}_{0}\left(I,-\vec{n}\right)\exp\left(\sqrt{-1}<\vec{n},\sum_{i=0}^{[t]-1}\omega\left(\hat{I}_{I}\left(i\right)\right)+\omega\left(\hat{I}_{I}\left([t]\right)\right)\left(t-[t]\right)>\right)dI
        \end{aligned}
    \end{equation}
    For sufficiently large $k$, taking $\epsilon = \frac{1}{k^4}$, we can get $p\in \mathbb{N}$ such that $|\hat{I}_{I}\left(n+p\right)-\hat{I}_{I}\left(n\right)| < \epsilon, \ \forall I \in \Omega$.

    To simplify the following symbols, we set 
    \begin{equation}
        \begin{aligned}
        \hat{G}\left(I+k\sum_{i=0}^{p-1}\mathscr{I}_{i},\vec{n}\right) &= \hat{G}_{k,p-1}^{\vec{n}}\left(I\right)\\
        \hat{G}\left(I+\sum_{i=0}^{kp-1}\mathscr{I}_{i},\vec{n}\right) &= \hat{G}_{kp-1}^{\vec{n}}\left(I\right) \\
        \exp[\sqrt{-1}<\vec{n},k\sum_{i=0}^{p-1}\omega\left(I+\sum_{j=0}^{i}\mathscr{I}_{j}\right)>] &= e_{k,p-1}^{\vec{n}}\left(I\right) \\
        \exp[\sqrt{-1}<\vec{n},\sum_{i=0}^{kp-1}\omega\left(I+\sum_{j=0}^{i}\mathscr{I}_{j}\right)>] &= e_{kp-1}^{\vec{n}}\left(I\right) 
        \end{aligned}
    \end{equation}
    Firstly, let's consider the situation of $\vec{n} \neq \vec{0}$.

    As $k \rightarrow \infty$, by lemma \ref{il},
    \begin{equation}
        \begin{aligned}
            & \int_{\Omega}\hat{G}_{k,p-1}^{\vec{n}}\left(I\right)\hat{f}_{0}\left(I,-\vec{n}\right)e_{k,p-1}^{\vec{n}}\left(I\right)dI \rightarrow 0 \  , \\
            & \int_{\Omega}\left(\hat{G}_{k,p-1}^{\vec{n}}-\hat{G}_{kp-1}^{\vec{n}}\right)\left(I\right)\hat{f}_{0}\left(I,-\vec{n}\right)e_{k,p-1}^{\vec{n}}\left(I\right)dI \rightarrow 0 \ ,
        \end{aligned}
    \end{equation}
    in addtion,
    \begin{equation}
        \begin{aligned}
            |e_{k,p-1}^{\vec{n}}\left(I\right)-e_{kp-1}^{\vec{n}}\left(I\right)| & \leq |\sin <\frac{\vec{n}}{2},k\sum_{i=0}^{p-1}\omega\left(\hat{I}_{I}\left(i\right)\right)-\sum_{i=0}^{kp-1}\omega\left(\hat{I}_{I}\left(i\right)\right)>| \\
            & \leq \frac{|\vec{n}||\omega|}{2}|0+p\epsilon+2p\epsilon+\cdots+(k-1)p\epsilon| \\
            & \leq \frac{|\vec{n}||\omega|p\left(k-1\right)}{4k^3} \rightarrow 0
        \end{aligned}
    \end{equation}
Hence,
\begin{equation}
    \begin{aligned}
        & |\int_{\Omega}\hat{G}_{k,p-1}^{\vec{n}}\left(I\right)\hat{f}_{0}\left(I,-\vec{n}\right)e_{k,p-1}^{\vec{n}}\left(I\right)dI-\int_{\Omega}\hat{G}_{kp-1}^{\vec{n}}\hat{f}_{0}\left(I,-\vec{n}\right)e_{kp-1}^{\vec{n}}\left(I\right)dI| \\
        & \leq |\int_{\Omega}\left(\hat{G}_{k,p-1}^{\vec{n}}-\hat{G}_{kp-1}^{\vec{n}}\right)\left(I\right)\hat{f}_{0}\left(I,-\vec{n}\right)e_{k,p-1}^{\vec{n}}\left(I\right)dI| + |\int_{\Omega}\hat{G}_{kp-1}^{\vec{n}}\left(I\right)\hat{f}_{0}\left(I,-\vec{n}\right)\left(e_{k,p-1}^{\vec{n}}-e_{kp-1}^{\vec{n}}\right)\left(I\right)dI|\\
        & \leq |\int_{\Omega}\left(\hat{G}_{k,p-1}^{\vec{n}}-\hat{G}_{kp-1}^{\vec{n}}\right)\left(I\right)\hat{f}_{0}\left(I,-\vec{n}\right)e_{k,p-1}^{\vec{n}}\left(I\right)dI| + |\Omega||G|\frac{|\vec{n}||\omega|p\left(k-1\right)}{4k^3} \rightarrow 0
    \end{aligned}
\end{equation}
which means $\forall \vec{n} \in \mathbb{Z}^{n}$, $\vec{n} \neq \vec{0}$,
\begin{equation}
        \displaystyle\lim_{N \rightarrow +\infty}\frac{1}{N}\int_{0}^{N} \left(2\pi\right)^{n}\int_{\Omega}\sum_{\substack{\vec{n} \neq \vec{0}\\\vec{n} \in \mathbb{Z}^n}}\hat{G}\left(\hat{I}_{I}\left([t]\right),\vec{n}\right)\hat{f}_{0}\left(I,-\vec{n}\right)\exp\left(\sqrt{-1}<\vec{n},\sum_{i=0}^{[t]-1}\omega\left(\hat{I}_{I}\left(i\right)\right)+\omega\left(\hat{I}_{I}\left([t]\right)\right)\left(t-[t]\right)>\right)dIdt=0.
\end{equation}
So,
\begin{equation}
    \begin{aligned}
        \displaystyle\lim_{N \rightarrow +\infty}\frac{1}{N}\int_{0}^{N} <G>_t dt &=  \displaystyle\lim_{N \rightarrow +\infty}\frac{1}{N}\int_{0}^{N} \left(2\pi\right)^{n}\int_{\Omega}\hat{G}\left(\hat{I}_{I}\left([t]\right),\vec{0}\right)\hat{f}_{0}\left(I,-\vec{0}\right)dIdt=0.\\
        &=\displaystyle\lim_{N \rightarrow +\infty}\frac{1}{N} \sum_{i=0}^{N-1}<G\left(\hat{I}_{I}\left(i\right)\right)>_0
    \end{aligned}
\end{equation}
\end{proof}
\begin{remark}
    Like Theorem \ref{gailvcedu}, we can also define probability measures and obtain their weak convergence:
    \begin{equation}
        \displaystyle\lim_{N \rightarrow +\infty}\frac{1}{N}\int_{0}^{N}P_{t}dt \Rightarrow \displaystyle\lim_{N \rightarrow +\infty}\frac{1}{N}\sum_{i=0}^{N-1}\bar{P}_{i}
    \end{equation}
\end{remark}

    {\bf Data Availability} No data was used in the research described in this paper.

    \section*{Acknowledgment}
                The work of Li, Y. is partially  supported by National Basic Research Program of China (No.
                2013CB834100),  Science and Technology Developing Plan of Jilin Province
                (No. 20190201302JC), 

    \section*{Declarations}
                {\bf Conflict of interest} The authors declare that they have no conflict of interest.

    \section*{Appendix A}\label{A1}
    We give the detailed derivation of one-parameter flow and its inverse map here.
    \subsection{The one-parameter flow and its inverse map in $\tau_1 \leq t < \tau_{2}$}\label{4.1}

    System (\ref{HI}) defines a corresponding  flow on $\Omega \times \mathbb{T}^n$ during $\tau_{1} <  t  \leq \tau_{2}$ given by the one-parameter family of maps:
    \begin{equation*}
        \begin{aligned}
            \varphi _t : \quad \Omega \times \mathbb{T}^n & \rightarrow \Omega_{\tau_{1}} \times \mathbb{T}^n \\
            \left(I,\theta\right)  &\rightarrow \varphi_t\left(I,\theta\right)=\left(I+\mathscr{I}_{\tau_{1}},\theta+\left(\tau_{1}-\tau_{0}\right)\omega\left(I\right)+\left(t- \tau_{1}\right)\omega\left(I+\mathscr{I}_{\tau_{1}}\right)\right),
            \end{aligned}
    \end{equation*}
    where $\Omega_{\tau_{1}}=\left\{I : I=I_0+\mathscr{I}_{\tau_{1}} , I_0 \in \Omega \right\}$. One can simply see that the $\varphi_t$ is an injection. What's more

    \begin{equation*}
        D\varphi_t=
        \begin{bmatrix}
            \bm{1_n}&0\\
            \left(\tau_{1}-\tau_{0}\right)\omega_{I}\left(I\right)+\left(t- \tau_{1}\right)\omega_{I}\left(I+\mathscr{I_{\tau_{1}}}\right)&\bm{1_n}
        \end{bmatrix},
        \qquad det\left(D\varphi_t\right)=1.
    \end{equation*}
    The inverse map of $\varphi_t$ on $\Omega_{\tau_{1}} \times \mathbb{T}^n$ is defined as :
    \begin{equation*}
        \begin{aligned}
            \varphi _t^{-1} : \quad \Omega_{\tau_{1}} \times \mathbb{T}^n & \rightarrow \Omega \times \mathbb{T}^n \\
            \left(I,\theta\right)  &\rightarrow\left(\varphi _t^{-1}\left(I,\theta\right)_{\left(I\right)},\varphi _t^{-1}\left(I,\theta\right)_{\left( \theta \right)}\right).
            \end{aligned}
    \end{equation*}
    On the one hand,
    \begin{equation*}
        \varphi_t \circ \varphi_t^{-1}\left(I,\theta\right)=\left(I,\theta\right),
    \end{equation*}
    on the other hand,
    \begin{equation*}
        \begin{aligned}
    \varphi_t &\left( \varphi _t^{-1}\left(I,\theta\right)_{\left(I\right)},\varphi _t^{-1}\left(I,\theta\right)_{\left( \theta \right)}\right)\\
              &=\left(\varphi _t^{-1}\left(I,\theta\right)_{\left(I\right)}+ \mathscr{I}_{\tau_{1}},\varphi _t^{-1}\left(I,\theta\right)_{\left( \theta \right)}+\left(\tau_{1}-\tau_{0}\right)\omega\left(\varphi _t^{-1}\left(I,\theta\right)_{\left(I\right)}\right)+\left(t- \tau_{1}\right)\omega\left(\varphi _t^{-1}\left(I,\theta\right)_{\left(I\right)}+\mathscr{I}_{\tau_{1}}\right)\right).
        \end{aligned}
    \end{equation*}
    Then
    \begin{equation*}
        \left\{
        \begin{aligned}
            I &= \varphi _t^{-1}\left(I,\theta\right)_{\left(I\right)}+ \mathscr{I}_{\tau_{1}} ,\\
        \theta &=\varphi _t^{-1}\left(I,\theta\right)_{\left( \theta \right)}+\left(\tau_{1}-\tau_{0}\right)\omega\left(\varphi _t^{-1}\left(I,\theta\right)_{\left(I\right)}\right)+\left(t- \tau_{1}\right)\omega\left(\varphi _t^{-1}\left(I,\theta\right)_{\left(I\right)}+\mathscr{I}_{\tau_{1}}\right),
        \end{aligned}
        \right.
    \end{equation*}
    which means:
    \begin{equation*}
        \left\{
        \begin{aligned}
            \varphi _t^{-1}\left(I,\theta\right)_{\left(I\right)} &=I-\mathscr{I}_{\tau_{1}} ,\\
            \varphi _t^{-1}\left(I,\theta\right)_{\left( \theta \right)} &=\theta -\left(\tau_{1}-\tau_{0}\right)\omega\left(I-\mathscr{I}_{\tau_{1}}\right)-\left(t- \tau_{1}\right)\omega\left(I\right).
        \end{aligned}
        \right.
    \end{equation*}
    Now we get the inverse map $\varphi_t^{-1}:$
    \begin{equation*}
        \varphi_t^{-1}(I,\theta)=\left(I-\mathscr{I}_{\tau_{1}} ,\theta -\left(\tau_{1}-\tau_{0}\right)\omega\left(I-\mathscr{I}_{\tau_{1}}\right)-\left(t- \tau_{1}\right)\omega\left(I\right)\right),\qquad \forall \left(I,\theta\right) \in \Omega_{t}\times \mathbb{T}^n,
    \end{equation*}
    \begin{equation*}
        D\varphi_t^{-1}=
        \begin{bmatrix}
            \bm{1_n}&0\\
            -\left(\tau_{1}-\tau_{0}\right)\omega_{I}\left(I-\mathscr{I}_{\tau_{1}}\right)-\left(t- \tau_{1}\right)\omega_{I}\left(I\right)&\bm{1_n}
        \end{bmatrix},
        \qquad det\left(D\varphi_t^{-1}\right)=1.
    \end{equation*}
    \subsection{The one-parameter flow and its inverse map in $\tau_0 < \tau_k \leq t < \tau_{k+1} \leq \tau_{p}$}\label{sec:11}
    Like the subsection \ref{4.1}, system (\ref{HI}) defines a corresponding  flow on $\Omega \times \mathbb{T}^n$ in $\tau_0 < \tau_k \leq t < \tau_{k+1} \leq \tau_{p}$ :
    \begin{equation*}
        \begin{aligned}
            \varphi _t : \quad \Omega \times \mathbb{T}^n & \rightarrow \Omega_{\tau_{k}} \times \mathbb{T}^n \\
            \left(I,\theta\right)  &\rightarrow \varphi_t\left(I,\theta\right)=\left(I+\sum_{i=0}^{k}\mathscr{I}_{\tau_{i}},\theta+\sum_{i=0}^{k-1}\left(\tau_{i+1}-\tau_i\right)\omega\left(I + \sum_{j=0}^{i}\mathscr{I}_{\tau_j}\right)+\left(t-\tau_{k}\right)\omega\left(I+\sum_{i=0}^{k}\mathscr{I}_{\tau_{i}}\right)\right),
            \end{aligned}
    \end{equation*}
    where $\Omega_{\tau_{k}}=\left\{I : I=I_0+\sum_{i=0}^{k}\mathscr{I}_{\tau_{i}} , I_0 \in \Omega \right\}$. Note
    \begin{equation*}
        D\varphi_t=
        \begin{bmatrix}
            \bm{1_n}&0\\
             \sum_{i=0}^{k-1}\left(\tau_{i+1}-\tau_i\right)\omega_{I}\left(I+\sum_{j=0}^{i}\mathscr{I}_{\tau_{j}}\right)+\left(t- \tau_{k}\right)\omega_{I}\left(I+\sum_{i=0}^{k}\mathscr{I}_{\tau_{i}}\right)&\bm{1_n}
        \end{bmatrix},
    \end{equation*}
    \begin{equation*}
        det\left(D\varphi_t\right)=1.
    \end{equation*}
The inverse map of $\varphi_t$ on $\Omega_{\tau_{k}} \times \mathbb{T}^n$ is defined as:
    \begin{equation*}
        \begin{aligned}
            \varphi _t^{-1} : \quad \Omega_{\tau_{k}} \times \mathbb{T}^n & \rightarrow \Omega \times \mathbb{T}^n \\
            \left(I,\theta\right)  &\rightarrow\left(\varphi _t^{-1}\left(I,\theta\right)_{\left(I\right)},\varphi _t^{-1}\left(I,\theta\right)_{\left( \theta \right)}\right).
            \end{aligned}
    \end{equation*}
    On the one hand,
    \begin{equation*}
        \varphi_t \circ \varphi_t^{-1}\left(I,\theta\right)=\left(I,\theta\right),
    \end{equation*}
    on the other hand,
    \begin{equation*}
        \begin{aligned}
    &\varphi_t \left( \varphi _t^{-1}\left(I,\theta\right)_{\left(I\right)},\varphi _t^{-1}\left(I,\theta\right)_{\left( \theta \right)}\right)\\
              &=\left(\varphi _t^{-1}\left(I,\theta\right)_{\left(I\right)}+\sum_{i=0}^{k}\mathscr{I}_{\tau_{i}},\varphi _t^{-1}\left(I,\theta\right)_{\left( \theta \right)}+\sum_{i=0}^{k-1}\left(\tau_{i+1}-\tau_i\right)\omega\left(\varphi _t^{-1}\left(I,\theta\right)_{\left(I\right)} + \sum_{j=0}^{i}\mathscr{I}_{\tau_j}\right)+\left(t-\tau_{k}\right)\omega\left(\varphi _t^{-1}\left(I,\theta\right)_{\left(I\right)}+\sum_{i=0}^{k}\mathscr{I}_{\tau_{i}}\right)\right).
        \end{aligned}
    \end{equation*}
    Then
    \begin{equation*}
        \left\{
        \begin{aligned}
            I &= \varphi _t^{-1}\left(I,\theta\right)_{\left(I\right)}+\sum_{i=0}^{k}\mathscr{I}_{\tau_{i}} ,\\
        \theta &=\varphi _t^{-1}\left(I,\theta\right)_{\left( \theta \right)}+\sum_{i=0}^{k-1}\left(\tau_{i+1}-\tau_i\right)\omega\left(\varphi _t^{-1}\left(I,\theta\right)_{\left(I\right)} + \sum_{j=0}^{i}\mathscr{I}_{\tau_j}\right)+\left(t-\tau_{k}\right)\omega\left(\varphi _t^{-1}\left(I,\theta\right)_{\left(I\right)}+\sum_{i=0}^{k}\mathscr{I}_{\tau_{i}}\right),
        \end{aligned}
        \right.
    \end{equation*}
    which means:
    \begin{equation*}
        \left\{
        \begin{aligned}
            \varphi _t^{-1}\left(I,\theta\right)_{\left(I\right)} &=I-\sum_{i=0}^{k}\mathscr{I}_{\tau_{i}} ,\\
            \varphi _t^{-1}\left(I,\theta\right)_{\left( \theta \right)} &=\theta -\sum_{i=0}^{k-1}\left(\tau_{i+1}-\tau_i\right)\omega\left(I-\sum_{j=i+1}^{k}\mathscr{I}_{\tau_{j}}\right)-\left(t- \tau_{k}\right)\omega\left(I\right).
        \end{aligned}
        \right.
    \end{equation*}
    Now we get the inverse map $\varphi_t^{-1}$ on $\Omega_{t}\times \mathbb{T}^n$:
    \begin{equation*}
        \varphi_t^{-1}(I,\theta)=\left(I-\sum_{i=0}^{k}\mathscr{I}_{\tau_{i}} ,\theta -\sum_{i=0}^{k-1}\left(\tau_{i+1}-\tau_i\right)\omega\left(I-\sum_{j=i+1}^{k}\mathscr{I}_{\tau_{j}}\right)-\left(t- \tau_{k}\right)\omega\left(I\right)\right),
    \end{equation*}
    and
    \begin{equation*}
        D\varphi_t^{-1}=
        \begin{bmatrix}
            \bm{1_n}&0\\
             -\sum_{i=0}^{k-1}\left(\tau_{i+1}-\tau_i\right)\omega_{I}\left(I-\sum_{j=i+1}^{k}\mathscr{I}_{\tau_{j}}\right)-\left(t- \tau_{k}\right)\omega_{I}\left(I\right)&\bm{1_n}
        \end{bmatrix},
    \end{equation*}
    \begin{equation*}
        det\left(D\varphi_t^{-1}\right)=1.
    \end{equation*}
    where the $\bm{1_n}$ denotes the $n$-order identity matrix.

    \section*{Appendix B}
    \begin{lemma}[Riemann-Lebesgue lemma](see \cite{stein1993harmonic})

        Let $\Omega \subset \mathbb{R}^{n}$ be open. Suppose that $a \in L^{1} \left(\Omega\right),$ and that $\phi \in C^{2}\left(\Omega\right)$ is real-valued with $\nabla \phi \neq 0$.
        Then for $\lambda \in \mathbb{R},$
        \begin{equation}
            I\left(\lambda\right) = \int_{\Omega} a\left(x\right) \exp\left[\sqrt{-1}\lambda \phi\left(x\right)\right]dx \rightarrow 0  \qquad  as \qquad |\lambda|\rightarrow \infty.\nonumber
        \end{equation}
    \end{lemma}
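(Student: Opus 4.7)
The plan is to follow the same Fourier strategy as in the proof of Theorem \ref{importanttheorem1}, with the periodicity of Lemma \ref{il} replaced by an almost-periodic approximation argument. First I would apply the Fourier expansion of $\langle G\rangle_t$ derived in Section 3 (with transitions now at the integer times $\tau_i=i$) and split the resulting sum over $\vec n\in\mathbb Z^n$ into the zero mode $\vec n=\vec 0$ and the fluctuating modes $\vec n\neq\vec 0$.

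For the zero mode, since $\hat I_I([t])$ is piecewise constant on each interval $[i,i+1)$ and $\hat G(\cdot,\vec 0)=\bar G(\cdot)$ is the angle-average, Fubini's theorem reduces the time integral on $[0,N]$ to the Riemann sum
\[
\frac1N\sum_{i=0}^{N-1}\int_{\Omega\times\mathbb T^n}\bar G(\hat I_I(i))\,f_0(I,\theta)\,dI\,d\theta=\frac1N\sum_{i=0}^{N-1}\bigl\langle \bar G(\hat I_I(i))\bigr\rangle_0,
\]
which is exactly the candidate right-hand side. The bulk of the proof is therefore to show that each fluctuating contribution
\[
I_{\vec n}(N):=\frac1N\int_0^N\!\int_\Omega\hat G(\hat I_I([t]),\vec n)\hat f_0(I,-\vec n)\,\exp\!\Bigl(\sqrt{-1}\bigl\langle\vec n,\textstyle\sum_{i=0}^{[t]-1}\omega(\hat I_I(i))+(t-[t])\omega(\hat I_I([t]))\bigr\rangle\Bigr)dI\,dt
\]
tends to $0$ as $N\to\infty$ for each $\vec n\neq\vec 0$, and then to sum over $\vec n\neq\vec 0$ using Lemma \ref{il14}.

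To handle a single mode $\vec n\neq\vec 0$ I would fix a large integer $k$, set $\epsilon_k=1/k^4$ and, by almost periodicity, choose $p=p(k)$ with $|\hat I_I(n+p)-\hat I_I(n)|<\epsilon_k$ uniformly in $n$ and $I\in\Omega$. I would then compare $I_{\vec n}(kp)$ with its genuinely periodic analogue obtained by repeating the first $p$ increments: condition (\ref{conditionc}) supplies $\overline\omega_N\in C^2$ with no critical points, so Lemma \ref{il} applies to the periodic analogue and kills it in the time-average. The discrepancy is controlled on the amplitude side by uniform continuity of $\hat G$ on $\Omega_\infty$ together with the telescoping bound $|\hat I_I(mp+r)-\widetilde I_I(mp+r)|\le m\epsilon_k\le k\epsilon_k=1/k^3$, and on the phase side by the estimate
\[
\bigl|e_{k,p-1}^{\vec n}(I)-e_{kp-1}^{\vec n}(I)\bigr|\le \frac{|\vec n|\,\|\omega\|_\infty\,p(k-1)}{4k^3},
\]
as sketched in the excerpt. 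Both errors vanish as $k\to\infty$, yielding $I_{\vec n}(kp(k))\to 0$; a standard squeeze between consecutive multiples of $p(k)$ upgrades this to $\lim_{N\to\infty}I_{\vec n}(N)=0$.

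The main obstacle I anticipate is summing over $\vec n\neq\vec 0$: the telescoping bound above grows linearly in $|\vec n|$ and the period $p(k)$ is not under our control, so the crude estimate $\sum_{\vec n\neq\vec 0}|I_{\vec n}(N)|$ could diverge. I would handle this by combining the trivial bound $|I_{\vec n}(N)|\le 2\int_\Omega|\hat G(\cdot,\vec n)\hat f_0(\cdot,-\vec n)|\,dI$ with the mode-wise convergence proved above, and invoking Lemma \ref{il14} together with the dominated convergence theorem to exchange sum and limit, exactly as in the proof of Theorem \ref{importanttheorem1}. Finally, the extension from $f_0\in C_c(\Omega\times\mathbb T^n)$ to $f_0\in L^1(\Omega\times\mathbb T^n)$ is a standard density argument using the boundedness of $G$, reproducing (\ref{170})--(\ref{172}).
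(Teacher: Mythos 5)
Your proposal does not prove the statement in question. The statement is the classical Riemann--Lebesgue lemma for oscillatory integrals (Appendix B): for $a\in L^{1}(\Omega)$ and a real-valued $\phi\in C^{2}(\Omega)$ with $\nabla\phi\neq 0$, one must show $\int_{\Omega}a(x)\exp[\sqrt{-1}\lambda\phi(x)]\,dx\to 0$ as $|\lambda|\to\infty$. What you have written instead is a sketch of the proof of the almost-periodic ensemble theorem of Section 5 --- Fourier expansion of $\langle G\rangle_{t}$, separation of the zero mode, the $\epsilon_k=1/k^{4}$ almost-periodic approximation, comparison with a periodic analogue via Lemma \ref{il}, and the density argument in $f_0$. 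None of that bears on the lemma as stated: there is no ensemble, no observable $G$, no time average, and no transition sequence in the statement; there is only a single oscillatory integral in the parameter $\lambda$. The paper itself offers no proof of this lemma --- it is quoted from \cite{stein1993harmonic} as a known tool --- so the relevant comparison is simply that your argument addresses a different theorem.

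A correct proof of the stated lemma runs along entirely different (and much shorter) lines: by density of $C_{c}^{\infty}(\Omega)$ in $L^{1}(\Omega)$ and the trivial bound $|I(\lambda)|\le\|a-\tilde a\|_{L^{1}}+|\tilde I(\lambda)|$, it suffices to treat $a\in C_{c}^{\infty}(\Omega)$. On the compact support of $a$ one has $|\nabla\phi|\ge c>0$, so one may write
\begin{equation}
\exp\left[\sqrt{-1}\lambda\phi(x)\right]=\frac{1}{\sqrt{-1}\lambda}\,\frac{\nabla\phi(x)\cdot\nabla}{|\nabla\phi(x)|^{2}}\exp\left[\sqrt{-1}\lambda\phi(x)\right],\nonumber
\end{equation}
integrate by parts once (this is where $\phi\in C^{2}$ and the compact support are used), and obtain $|I(\lambda)|=O(1/|\lambda|)$. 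This non-stationary-phase argument is the content of the citation to Stein, and it is also the mechanism that your Lemma \ref{il} ultimately relies on; but it is absent from your proposal, which therefore leaves the actual statement unproved.
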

    \begin{theorem}[Parseval's theorem](see \cite{danese1965advanced})

        Suppose $f\left(x\right)$ is a square-integrable function over $[-\pi,\pi]$ (i.e. $f\left(x\right)$ and $f^{2}\left(x\right)$ are integrable on that interval), with the Fourier series
        \begin{equation}
            f \left(x\right) \simeq \frac{a_0}{2} + \sum_{n=1}^{\infty}\left(a_n\cos \left(nx\right)+b_n\sin\left(nx\right)\right).\nonumber
        \end{equation}
        Then
        \begin{equation}
            \frac{1}{\pi}\int_{-\pi}^{\pi}f^{2}\left(x\right)dx = \frac{a_0^2}{2} + \sum_{i=1}^{\infty}\left(a_{i}^{2}+b_{i}^{2}\right).\nonumber
        \end{equation}
    \end{theorem}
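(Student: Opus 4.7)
The plan is to establish Parseval's identity as a consequence of the fact that the trigonometric system
$\{1\} \cup \{\cos(nx), \sin(nx)\}_{n \geq 1}$
is a complete orthogonal family in $L^2([-\pi, \pi])$. I would structure the argument in three stages: (i) compute the $L^2$ inner products of $f$ with the partial Fourier sum using orthogonality, (ii) derive Bessel's inequality as a byproduct, and (iii) promote the inequality to equality via an $L^2$ density argument for trigonometric polynomials.

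First, I would record the orthogonality relations over $[-\pi,\pi]$: for integers $m,n \geq 1$,
\begin{equation*}
\int_{-\pi}^{\pi}\cos(nx)\cos(mx)\,dx = \pi\delta_{nm},\qquad \int_{-\pi}^{\pi}\sin(nx)\sin(mx)\,dx = \pi\delta_{nm},
\end{equation*}
with all mixed integrals vanishing, and $\int_{-\pi}^{\pi} 1\,dx = 2\pi$. From the definitions $a_n = \tfrac{1}{\pi}\int_{-\pi}^{\pi} f(x)\cos(nx)\,dx$ and $b_n = \tfrac{1}{\pi}\int_{-\pi}^{\pi} f(x)\sin(nx)\,dx$, and setting
\begin{equation*}
S_N(x) = \frac{a_0}{2} + \sum_{n=1}^{N}\bigl(a_n\cos(nx)+b_n\sin(nx)\bigr),
\end{equation*}
a direct expansion using orthogonality yields both
\begin{equation*}
\frac{1}{\pi}\int_{-\pi}^{\pi} S_N(x)^2\,dx \;=\; \frac{a_0^2}{2} + \sum_{n=1}^{N}(a_n^2+b_n^2)
\end{equation*}
and the same quantity for $\tfrac{1}{\pi}\int_{-\pi}^{\pi} f(x)S_N(x)\,dx$. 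Expanding $\|f - S_N\|_{L^2}^2 \geq 0$ then gives Bessel's inequality, so the series $\sum(a_n^2+b_n^2)$ converges and equality in the theorem reduces to showing $\|f - S_N\|_{L^2} \to 0$.

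The main obstacle — and the only nontrivial analytic input — is this last $L^2$ convergence, i.e.\ the completeness of the trigonometric system. I would handle it in two steps: approximate $f \in L^2([-\pi,\pi])$ by a $2\pi$-periodic continuous function $g$ with $\|f-g\|_{L^2} < \varepsilon$ (using density of continuous compactly supported functions in $L^2$, followed by a small modification near the endpoints to enforce periodicity), and then approximate $g$ uniformly by a trigonometric polynomial $T$ via Fejér's theorem (the Cesàro means of the Fourier series of $g$ converge uniformly to $g$). Since $T$ is itself a trigonometric polynomial of some degree $M$, the best $L^2$ approximation to $f$ among trigonometric polynomials of degree $\leq M$ is $S_M$, so $\|f - S_M\|_{L^2} \leq \|f - T\|_{L^2} \leq \|f-g\|_{L^2} + \sqrt{2\pi}\,\|g-T\|_\infty$, which can be made arbitrarily small. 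Monotonicity of $\|f - S_N\|_{L^2}$ in $N$ then gives $\|f - S_N\|_{L^2} \to 0$, and rewriting $\|f - S_N\|_{L^2}^2 = \|f\|_{L^2}^2 - \pi\bigl(\tfrac{a_0^2}{2} + \sum_{n=1}^N(a_n^2+b_n^2)\bigr)$ produces the identity upon sending $N \to \infty$. The hard part is genuinely the Fejér/Stone–Weierstrass ingredient used to certify completeness; everything else is bookkeeping with orthogonality.
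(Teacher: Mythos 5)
The paper offers no proof of this statement: Parseval's theorem sits in Appendix~B purely as a quoted classical fact, with a pointer to the textbook \cite{danese1965advanced}, and is invoked as a black box in Section~3 when the expectation $<G>_t$ is expanded in Fourier modes. There is therefore nothing in the paper to compare your argument against; what can be said is that your proof is correct and is the standard one. The orthogonality computations are right (both $\tfrac{1}{\pi}\int_{-\pi}^{\pi}S_N^2\,dx$ and $\tfrac{1}{\pi}\int_{-\pi}^{\pi}fS_N\,dx$ equal $\tfrac{a_0^2}{2}+\sum_{n=1}^{N}(a_n^2+b_n^2)$, so $\|f-S_N\|_{L^2}^2=\|f\|_{L^2}^2-\pi\bigl(\tfrac{a_0^2}{2}+\sum_{n=1}^{N}(a_n^2+b_n^2)\bigr)$ and Bessel's inequality follows), and you correctly isolate the one genuinely nontrivial ingredient, namely completeness of the trigonometric system, which you supply via density of periodic continuous functions in $L^2$ together with Fej\'er's theorem and the fact that $S_M$ is the best $L^2$ approximation among trigonometric polynomials of degree at most $M$. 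Since the hypothesis that $f$ and $f^2$ are integrable on $[-\pi,\pi]$ places $f$ in $L^2([-\pi,\pi])$, the density argument applies verbatim, and the identity follows on letting $N\to\infty$. No gaps.
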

    \begin{theorem}[Fubini theorem](see \cite{royden1988real})\label{FT}

        Let $\left(X,\mathcal{A},\mu\right)$ and $\left(Y,\mathcal{B},\nu\right)$ be two measure spaces and $\nu$ be complete. Let $f$ be integrable over $X \times Y$
        with respect to the product measure $\mu \times \nu$. Then for almost all $x\in X$, the $x-$section of $f$, $f\left(x,\cdot\right)$, is integrable over $Y$ with respect to $\nu$ and
        \begin{equation}
            \int_{X \times Y}f d\left(\mu \times \nu\right)=\int_{X}\left[\int_{Y}f\left(x,y\right)d\nu\left(y\right)\right]d\mu\left(x\right).\nonumber
        \end{equation}
        \end{theorem}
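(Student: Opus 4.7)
The plan is to establish the identity by the standard four-stage escalation from measurable rectangles to general integrable functions (Tonelli followed by Fubini). First I would handle the case $f = \mathbf{1}_E$ where $E = A \times B$ is a measurable rectangle in $\mathcal{A} \otimes \mathcal{B}$: here the $x$-section satisfies $f(x,\cdot) = \mathbf{1}_A(x)\mathbf{1}_B$, which is manifestly $\nu$-integrable for every $x \in X$ with integral $\mathbf{1}_A(x)\nu(B)$, and integrating over $X$ reproduces $\mu(A)\nu(B) = (\mu \times \nu)(E)$. This base case is essentially bookkeeping.

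The core combinatorial work is the passage from rectangles to a general set $E \in \mathcal{A} \otimes \mathcal{B}$. I would introduce the class $\mathcal{M}$ of those product-measurable $E$ for which (i) the $x$-section $E_x$ is $\nu$-measurable for $\mu$-a.e. $x$, (ii) $x \mapsto \nu(E_x)$ is $\mu$-measurable, and (iii) $\int_X \nu(E_x)\,d\mu(x) = (\mu \times \nu)(E)$. The goal is to show $\mathcal{M} = \mathcal{A} \otimes \mathcal{B}$. Under a preliminary $\sigma$-finite reduction, $\mathcal{M}$ contains the algebra of finite disjoint unions of rectangles by additivity, is closed under increasing unions by monotone convergence, and is closed under decreasing intersections of sets of finite product measure by dominated convergence; the monotone class theorem (or a $\pi$-$\lambda$ argument) then forces $\mathcal{M}$ to contain the generated $\sigma$-algebra. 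The completeness hypothesis on $\nu$ enters here: $(\mu \times \nu)$-null sets must have $\nu$-null $x$-sections for $\mu$-a.e. $x$, and completeness of $\nu$ is what ensures these sections lie in $\mathcal{B}$ rather than only in a completion, so that the inner integral is a legitimate $\mathcal{A}$-measurable function after modification on a $\mu$-null set.

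Having established the identity for indicators, I would extend by linearity to nonnegative simple functions, and then by monotone convergence to arbitrary nonnegative $\mathcal{A} \otimes \mathcal{B}$-measurable $f$, choosing $0 \le s_n \uparrow f$ with $s_n$ simple and applying MCT in each of the three integrals in turn. This yields the Tonelli conclusion unconditionally for nonnegative integrands. For a general $f \in L^1(\mu \times \nu)$ I would then split $f = f^+ - f^-$ and apply Tonelli to $|f|$, $f^+$, and $f^-$ separately: from $\int\!\!\int |f|\,d\nu\,d\mu < \infty$ we conclude that $\int_Y f^\pm(x,y)\,d\nu(y)$ is finite for $\mu$-a.e.\ $x$, so the difference $\int_Y f(x,y)\,d\nu(y)$ is well defined a.e., is $\mu$-integrable, and subtraction of the two Tonelli identities gives the claim.

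The main obstacle is the monotone class step, together with the associated $\sigma$-finite reduction needed to invoke dominated convergence on decreasing sequences; this is the only place where genuine measure-theoretic machinery is used, and it is also where the completeness of $\nu$ is essential in order to identify sections of product-null sets as $\nu$-measurable. Once this step is in hand, the escalation to nonnegative measurable functions and then to integrable functions is routine via MCT and the $f = f^+ - f^-$ decomposition.
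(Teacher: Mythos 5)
The paper does not prove this statement: it is quoted verbatim in Appendix~B as a classical result with a citation to Royden's \emph{Real Analysis}, so there is no internal proof to compare against. Your argument is the standard monotone-class proof (rectangles, then the class $\mathcal{M}$ closed under monotone limits with the $\sigma$-finite reduction supplied by the integrability of $f$, then simple functions, Tonelli for nonnegative integrands, and finally $f = f^{+} - f^{-}$), which is essentially the proof in the cited source and is sound, including your correct identification of where the completeness of $\nu$ is used to make sections of product-null sets $\nu$-measurable.
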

        \begin{theorem}[Lebesgue's dominated convergence theorem](see \cite{book91750210})

            Let $f_{i}$ be a sequence of complex-valued measurable functions on a measure space $(S, \Sigma, \mu)$. Suppose that the sequence converges pointwise to a function $f$ and is dominated by some integrable function $g$ in the sense that
            \begin{equation}
                |f_{i}\left(x\right)| \leq g\left(x\right)\nonumber
            \end{equation}
            for all numbers $i$ in the index set of the sequence and all points $x \in S$. Then $f$ is integrable (in the Lebesgue sense) and
            \begin{equation}
                \lim_{i \rightarrow \infty}\int_{S}|f_{i}\left(x\right)-f\left(x\right)| d\mu = 0,\nonumber
            \end{equation}
            which also implies
            \begin{equation}
                \lim_{i \rightarrow \infty}\int_{S}f_{i}\left(x\right) d\mu =\int_{S}f\left(x\right) d\mu.\nonumber
            \end{equation}
        \end{theorem}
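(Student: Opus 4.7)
The plan is to mirror the proof strategy of Theorem \ref{importanttheorem1}: Fourier-expand $G$ in the angle variable via Parseval, then separate the $\vec{n}=\vec{0}$ contribution (which yields the desired right-hand side) from the $\vec{n}\neq \vec{0}$ contribution (which must be shown to vanish), and finally extend from compactly supported $f_0$ to arbitrary $f_0\in \bm{L}^1$ by density.

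First I would compute, exactly as in Section 3,
\begin{equation*}
<G>_t = (2\pi)^n \int_\Omega \sum_{\vec{n}\in\mathbb{Z}^n} \hat{G}\left(\hat{I}_{I}\left([t]\right),\vec{n}\right)\hat{f}_0\left(I,-\vec{n}\right)\exp\left(\sqrt{-1}<\vec{n},\sum_{i=0}^{[t]-1}\omega\left(\hat{I}_I(i)\right)+\omega\left(\hat{I}_I([t])\right)(t-[t])>\right)dI.
\end{equation*}
The $\vec{n}=\vec{0}$ slice is piecewise constant in $t$ (constant on every unit interval $[j,j+1)$), and its time-average over $[0,N]$ collapses telescopically to $\tfrac{1}{N}\sum_{j=0}^{N-1}<\bar G(\hat{I}_I(j))>_0$, which is exactly the right-hand side of the theorem after taking $N\to\infty$.

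The main work is to show that for every $\vec{n}\neq \vec{0}$ the oscillatory time-average vanishes. The key idea is to approximate the almost-periodic orbit by a genuinely periodic one and then invoke Lemma \ref{il}. Given a large $k$, set $\epsilon_k = k^{-4}$ and use almost-periodicity (which, since $\hat{I}_{I_0}(n+p)-\hat{I}_{I_0}(n)$ does not in fact depend on $I_0$, can be taken uniformly in $I_0\in\Omega$) to produce an $\epsilon_k$-almost-period $p=p(k)$, so that $|\hat{I}_I(n+p)-\hat{I}_I(n)|<\epsilon_k$ for every $n\in\mathbb{N}$ and $I\in\Omega$. On the window $[0,kp)$, replace the true objects by their $p$-periodic surrogates, as indicated by the notation in the excerpt: substitute $\hat G(I+\sum_{i=0}^{kp-1}\mathscr{I}_i,\vec{n})$ by $\hat G(I+k\sum_{i=0}^{p-1}\mathscr{I}_i,\vec{n})$, and replace the phase $\sum_{i=0}^{kp-1}\omega(\hat I_I(i))$ by its periodicized counterpart $k\sum_{i=0}^{p-1}\omega(\hat I_I(i))$. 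Since $\hat{G}$ and $\omega$ are uniformly continuous on the bounded closure of $\Omega_\infty$, a telescoping estimate gives, uniformly in $I$, that both substitutions introduce an error of order $\epsilon_k\cdot kp=O(p/k^{3})$, which is negligible compared with the natural normalisation $1/(kp)$.

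For the periodicized integrand, hypothesis (\ref{conditionc}) applied with $N=p$ states that $\overline{\omega}_p(I)$ has no critical points, which is precisely the non-degeneracy needed to invoke Lemma \ref{il} with period $T=p$; hence the time-average of the periodicized oscillatory integral over $[0,kp)$ tends to zero as $k\to\infty$. Combining this with the error estimate above, and then summing over $\vec{n}\neq\vec{0}$ by means of Lemma \ref{il14} and dominated convergence exactly as in Theorem \ref{importanttheorem1}, yields the vanishing of the nonzero Fourier modes. Density of $\bm{C}_c(\Omega\times\mathbb{T}^n)$ in $\bm{L}^1(\Omega\times\mathbb{T}^n)$ together with boundedness of $G$ extends the identity from compactly supported $f_0$ to arbitrary $f_0\in\bm{L}^1$. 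The principal obstacle will be the quantitative coupling between $k$ and $p(k)$: one must choose $\epsilon_k$ small enough that the periodicization error dies, yet not so small that the induced almost-period $p(k)$ grows so fast as to spoil the Riemann-Lebesgue conclusion. Controlling this trade-off uniformly in $I\in\Omega$ is the delicate point of the argument.
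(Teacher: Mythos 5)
Your proposal does not address the statement it is supposed to prove. The statement in question is the classical Lebesgue dominated convergence theorem, quoted in Appendix B from a standard reference and used in the paper as a known tool; the paper offers no proof of it, and any proof would have to be a measure-theoretic argument (e.g., applying Fatou's lemma to the nonnegative sequence $2g - |f_i - f|$ to conclude $\limsup_i \int_S |f_i - f|\, d\mu = 0$, then using $\bigl|\int_S f_i \, d\mu - \int_S f\, d\mu\bigr| \leq \int_S |f_i - f|\, d\mu$). Nothing in your text engages with the hypotheses $|f_i(x)| \leq g(x)$, pointwise convergence, or integrability of the limit.

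What you have written instead is a proof sketch for the almost-periodic ensemble theorem of Section 5 (the Fourier decomposition of $<G>_t$, the separation of the $\vec{n}=\vec{0}$ mode, the periodicization of the almost-periodic orbit, and the appeal to Lemma \ref{il} and condition (\ref{conditionc})). As an outline of that theorem your sketch is broadly aligned with the paper's own argument, but it is a proof of the wrong statement. The gap is therefore total: the claim to be established is an elementary fact of integration theory, and no amount of Fourier analysis of the observable $G$ or control of the almost-period $p(k)$ bears on it.
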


    \newpage

\end{document}